\newfont{\eufm}{eufm10 scaled\magstep1}
\newcommand{\cC}{\mathcal{C}}
\newcommand{\cD}{\mathcal{D}}
\newcommand{\cE}{\mathcal{E}}
\newcommand{\cR}{\mathcal{R}}
\newcommand{\cS}{\mathcal{S}}
\newcommand{\bbN}{\mathbb{N}}
\newcommand{\bbC}{\mathbb{C}}
\newcommand{\bbD}{\mathbb{D}}
\newcommand{\bbK}{\mathbb{K}}
\newcommand{\bbR}{\mathbb{R}}
\def\dres{\partial{\rm Res}}
\def\ord{\rm ord}
\def\KdV{\rm KdV}
\def\kdv{\rm kdv}
\def\Sing{\rm Sing}
\def\para{\vspace{1.5 mm}}
\newtheorem{thm}{Theorem}[section]
\newtheorem{lem}[thm]{Lemma}
\newtheorem{prop}[thm]{Proposition}
\newtheorem{defi}[thm]{Definition}
\newtheorem{rem}[thm]{Remark}
\newtheorem{ex}[thm]{Example}
\begin{document}

\begin{frontmatter}

\title{Spectral Picard-Vessiot fields \\ for Algebro-geometric Schr\" odinger operators}

\author{Juan J. Morales-Ruiz}
\address{Dpto. de Matem\' atica Aplicada. E.T.S. Edificaci\' on. Avda. Juan de Herrera 6.\\
Universidad Polit\' ecnica de Madrid. 28040, Madrid. Spain}
\ead{juan.morales-ruiz@upm.es}

\author{Sonia L. Rueda}
\address{Dpto. de Matem\' atica Aplicada. E.T.S. Arquitectura. Avda. Juan de Herrera 4.\\
Universidad Polit\' ecnica de Madrid. 28040, Madrid. Spain}
\ead{sonialuisa.rueda@upm.es}

\author{Maria-Angeles Zurro}
\address{Dpto. de Matem\' aticas. Facultad de Ciencias. Ciudad Universitaria de Cantoblanco.\\
Universidad Aut\'onoma de Madrid. E-28049 Madrid. Spain}
\ead{mangeles.zurro@uam.es}

%\maketitle

%\thispagestyle{empty}

\begin{abstract}

This work is a galoisian study of the spectral problem $L\Psi=\lambda\Psi$, for algebro-geometric second order differential operators $L$, with coefficients in a differential field, whose field of constants $C$ is algebraically closed and of characteristic zero. Our approach  regards the spectral parameter $\lambda$ an algebraic variable over $C$, forcing the consideration of a new field of coefficients for $L-\lambda$,  whose field of constants is the field $C(\Gamma)$ of the spectral curve $\Gamma$. Since $C(\Gamma)$   is no longer algebraically closed,  the need arises of a new algebraic structure, generated by the solutions of the spectral problem over $\Gamma$,  called ``Spectral Picard-Vessiot field" of $L-\lambda$. An existence theorem is proved using differential algebra, allowing to recover classical Picard-Vessiot theory for each $ \lambda = \lambda_0 $.  For rational spectral curves, the appropriate algebraic setting is established to solve $L\Psi=\lambda\Psi$  analitically and to use symbolic integration. We illustrate our results for Rosen-Morse  solitons.

\end{abstract}

\begin{keyword} Picard-Vessiot extension, Liouvillian extension, algebro-geometric operator, spectral curve.

MSC[2010]:  12H05,  34M15
\end{keyword}

\end{frontmatter}

%\linenumbers

%\tableofcontents

%%%%%%%%%%%%%%%%%%%%%%%%%%%%%%%%%%%%%%%
%%%%%%%%%%%% NEW INTRODUCTION
%%%%%%%%%%%%%%%%%%%%%%%%%%%%%%%%%%%%

%%%%%%%%%%%%%%%%%
\section{ Introduction}
%%%%%%%%%%%%%%%%%%%%%%%%%%%

\setcounter{equation}{0}

Algebro-geometric operators are deeply linked to the integrabillity of partial differential equations of solitonic type, a review on this subject can be found in \cite{BB}, \cite{GH}, \cite{GGKM}. In this paper we describe the differential field structure generated by the solutions of the spectral problem 
\begin{equation}\label{eq-problem}
    L\Psi=\lambda\Psi,  
\end{equation}
for an algebro-geometric second order operator $L$, with coefficients in a differential field $(\Sigma,\partial)$, whose field of constants $C$ is algebraically closed and of characteristic zero. The novelty of our approach is to regard the spectral parameter  $\lambda$ to be an algebraic variable over $C$. This forces the consideration of a carefully chosen new field of coefficients for $L-\lambda$, whose field of constants provides the  natural constants of this spectral problem, the field of rational functions on a plane algebraic curve, the so called spectral curve. 

\para

{The goal of this work is a galoisian study of the algebro-geometric spectral problem \eqref{eq-problem}}.
One of our guiding ideas is the strong connection between the integrability (ie, solvability in closed form) of the direct and the inverse problems for the Schr\"{o}dinger equation. By ``direct problem" we understand, given the potential to obtain the eigenfunctions and the eigenvalues. The ``inverse problem " would be to obtain the potential from some suitable spectral data. In
fact, this was also the motivation for Drach in his 1919 papers \cite{Drach2, Drach3}, about the integrability in
closed form of the equation
\[
\dfrac{d^2 y}{dx^2}= (\varphi(x) + h)y \quad (I)
\]
where $h$ is the spectral parameter. So he wrote in \cite{Drach2}:

\para 

{\it ``Il est donc tr\`es important de conna\^itre les cas o\`u une simplification se presente dans l'int\'egration
de (I), en laissant le param\`etre h arbitraire. Nous avons r\'eussi \`a d\'eterminer la fonction $\varphi$ dans
tous les cas o\`u l'int\'egrale y peut s'obtenir par quadratures... ".
}
\para

\noindent Moreover he said that to study this problem
it is possible to use the classical theory of Picard about linear equations (Picard-Vessiot theory).
In other words, he considered the integrability of the direct problem in the sense of the Picard-Vessiot theory.  One could use the classical Picard-Vessiot theory \cite{VPS} for each choice of $\lambda=\lambda_0\in C$ and obtain the minimal field extension of the coefficient field of $L-\lambda_0$ that contains the solutions of $L\Psi=\lambda_0\Psi$.
A first attempt to use classical Picard-Vessiot theory was made by Y. Brezhnev in \cite{Brez3} that, following the ideas of Drach \cite{Drach2, Drach3}, exhibited formulas for the solutions $\Psi=\Psi(x,\lambda)$ by means of theta functions.
But along his papers Drach indicates that the constants are functions of the spectral parameter h.

\para

The alternative we present considers $\lambda$ as an algebraic parameter over the coefficient field $K$, which is not a free parameter but verifies the equation of an algebraic plain curve, the spectral curve, defined by a polynomial $f(\lambda,\mu)=0$. Our construction allows new achievements in the study of algebro-geometric operators, as we explain next.

\para 

{Algebro-geometric operators \cite{We} are characterized in this paper by having a nontrivial centralizer}. Moreover, in the case of second order operators we can prove that the centralizer is the ring $C[L,A]$, for an appropriate minimal odd order operator $A$, which is the ring $C(\Gamma )$ of an affine plane algebraic curve $\Gamma$. The curve $\Gamma$ is the celebrated {\it spectral curve} discovered by Burchnall and Chaundy in their visionary article \cite{BC}, where a correspondence was established between algebraic curves and pairs of commuting ordinary differential operators. 
Thus for algebro-geometric operators the spectral parameter $\lambda$ is not a free parameter since  $f(\lambda,\mu)=0$, and it also provides an algebraic relation between the operators $ L $ and $ A $,  $f(L,A)=0$. 

\para 

In Section \ref{sec-alg-geom} we explain  the connection between algebro-geometric operators, non trivial centralizers and the Korteveg de Vries (KdV) hierarchy of differential equations. More precisely, algebro-geometric second order opera\-tors in normal form {(i.e. with $0$ coefficient in $\partial$)} are Schr\" odinger operators $L=-\partial^2+u$ with potential $u$ verifying one of the equations of the KdV hierarchy (see Appendix \ref{sec-KdV}). In fact we establish the following result.

\para 

{
{\bf Theorem A. }
%(see Theorem \ref{thm-agCen} and Corollary \ref{cor-KdV})\  
{\it Given $L_s = -\partial^2+u_s$ the following statements are equivalent.}
\begin{enumerate}
    \item {\it $L_s$ is algebro-geometric.}
    \item {\it There exists a unique monic operator $A_{2s+1}$ of minimal order $2s+1$  such that $\cC(L_s)=C[L_s,A_{2s+1}]$ and $A^2_{2s+1}+R_{2s+1}(L_s)=0$, with $R_{2s+1}(\lambda)$ in $C[\lambda]$ of degree $2s+1$.}
    \item {\it $u_s$ is a $KdV$-potential of KdV level $s$ (i.e. it satisfies one of the $\KdV_s$ equations of the KdV-hierarchy, see Appendix \ref{sec-KdV}).}
\end{enumerate}}

Hence, without loss of generality, we restrict to this case. The spectral curve is then defined by $f_s (\lambda ,\mu )=\mu^2+R_{2s+1}(\lambda)=0$ and it is appropriate to  consider  {the smallest differential field $K=C\langle u_s \rangle$ containing $ u_s $ and $ C $,} as the coefficient field of $L_s$, to study the {galoisian properties of $L_s-\lambda$}. {Furthermore, the} operator $L_s -\lambda$ determines as coefficient field an extended field of the curve, more precisely the fraction field $K(\Gamma_s )$ of the domain $K[\Gamma_s ]=K[\lambda,\mu]/(f_s)$.
{An important contribution of this paper is to establish its field of constants in Theorem \ref{thm-constants}.} 

\para

{\bf Theorem B.} 
{\it The field of constants of $K(\Gamma_s )$ is $C(\Gamma_s )$. }

\para

In Section \ref{sec-Picard-Vessiot}, we construct the minimal field extension $\cE$ of $K(\Gamma_s)$ containing the solutions of \eqref{eq-problem}. We call this new differential field structure {\it spectral Picard-Vessiot} field over the curve $\Gamma_s$,  since
\[
C(\Gamma_s )\subset K (\Gamma_s  )\subset \cE \ .
\]
{The field of constants $C(\Gamma_s)$ attached to the spectral coupled problem
\begin{equation}\label{eq-problem2}
     L_s\Psi=\lambda\Psi \quad , \quad  A_{2s+1}\Psi=\mu\Psi \ ,
\end{equation}
is no longer an algebraically closed field, forcing an adapted new Picard-Vessiot theory where the structure of the spectral curve plays an essential role. 
}

\para

The differential algebra theory  developed for the algebraic integration of the Risch differential equation in \cite{Bron}, by M. Bronstein and other authors,  was essential to prove the main results {of this paper}. {The Subresultant Theorem \ref{thm-subres},  in Appendix \ref{subsec-subres}, guarantees the existence of an ``intrinsic" right (common) factor $\partial-\phi_s$ of $L_s -\lambda$ (and $A_s-\mu$) in $K(\Gamma_s )[\partial]$, linked to the uniqueness of $A_{2s+1}$}. 
{
The common solution of \eqref{eq-problem2}, is the transcendental element $\Psi_s$, which is the  hyperexponential defined by $\partial \Psi_s=\phi_s\Psi_s$ over $K(\Gamma_s )$.}

\para

{The main result of this paper is the following existence theorem for spectral Picard-Vessiot extensions. It is based on Theorem \ref{thm-Bron}, where we prove that $\cE$ is a transcendental Liouvillian extension $K(\Gamma_s)\langle \Psi_s\rangle$ of $K(\Gamma_s)$, whose field of constants is the field of the curve $C(\Gamma_s)$. 
}

\para

{\bf Theorem C.} %(Theorem \ref{thm-decomp})  
{\it 
Let $L_s$ be an algebro-geometric Schr\" odinger operator with spectral curve $\Gamma_s$. Let us consider  $\partial-\phi_s$, the intrinsic right factor of $L_s-\lambda$ in $K(\Gamma_s)[\partial]$, and a nonzero solution $\Psi_s$
defined by the differential relation $\partial(\Psi_s)=\phi_s\Psi_s$.
Then $K(\Gamma_s)\langle \Psi_s\rangle$ is a spectral Picard-Vessiot field over the curve $\Gamma_s$ of the equation $(L_s-\lambda)(\Psi)=0$.
}
\para

{Traditionally the spectral curve was assumed to be non-singular \cite{K77}, \cite{GH}, \cite{Brez3}. Nevertheless Burchnall and Chaundy in their 1931 paper \cite{BC2} realized that spectral curves with singularities need a special treatment. They studied the case of cuspidal curves, which are in fact singular rational curves, defined by $\mu^2-\lambda^{2s+1}=0$ for a particular type of Schr\" odinger operators. }

\para 

{We show how the study of \eqref{eq-problem} by means of classical Picard-Vessiot theory, for $\lambda=\lambda_0\in C$, is recovered. It is important to note that the spectral curve may be singular, to consider the spectral problem \eqref{eq-problem2} at each point $P_0=(\lambda_0,\mu_0)$ of $\Gamma_s$, that is}
\begin{equation}\label{eq-intro-problemesp}
     L_s\Psi=\lambda_0\Psi,\,\,\,  A_{2s+1}\Psi=\mu_0\Psi.
\end{equation}
We give the classical Picard-Vessiot extension $\cE_{P_0}$ of $K$, the coefficient field of $L-\lambda_0$, in Section \ref{sec-specialization}, Theorems \ref{thm-classPV1} and \ref{thm-classPV2}, distinguishing the treatment of singular and non-singular points of $\Gamma$. The differential algebra developed to solve the Risch differential equation in \cite{Bron} is again the key to prove these results. For all nonsingular points, but a finite number, this is the Liouvillian extension $K\langle y_0\rangle$ by  a transcendental element $y_0$, which is the common solution of \eqref{eq-intro-problemesp}. In fact $\partial y_0=\phi_0 y_0$,  where the common factor  $\partial-\phi_0=\partial-\phi_s(P_0)$ of $L-\lambda_0$ and $A-\mu_0$ is obtained by the specialization to $P_0$ of the common factor of problem \eqref{eq-problem2}. Moreover, for $u=u(x)\in \cC^{\infty}(\bbR)$ it is the well known Baker-Akheizer function $y_0 = \Psi(P_0,x,x_0)$ in \cite{GH}. Furthermore, at singular points we obtain a description of  the sequence of differential field extensions of $K$  to obtain $\cE_{P_0}$ (see Theorem  \ref{thm-classPV2}).

\para

In Section \ref{sec-Parametric} we restrict to the case of rational spectral curves, where the field of the curve is the field of rational functions $C(\tau)$ in an algebraic parameter $\tau$.
Considering a rational parametrization of $\Gamma$, say $\aleph (\tau)=(\chi_1(\tau),\chi_2(\tau))  $
, the spectral problem \eqref{eq-problem} in one-parameter form is
\begin{equation}\label{eq-problemparametrico}
    L\Psi=\chi_1(\tau)\Psi \ .
\end{equation}
The chosen parametrization establishes an isomorphism between $K(\Gamma)$ and $K(\tau)$, which is now the coefficient field of $L-\chi_1(\tau)$. 

\para 

More precisely, 
%given an algebro-geometric Schr\" odinger operator $L_s$ with rational spectral curve $\Gamma_s$, 
the isomorphism establised by the parametrization provides a right factor $\partial-\tilde{\phi}_s$ of $L_s-\chi_1(\tau)$.
In Section \ref{sec-Parametric}, we show that the spectral Picard-Vessiot field of $L_s-\lambda$ is isomorphic to a Liouvillian extension $K(\tau)\langle \Upsilon_s\rangle$ of the coefficient field $K(\tau)$ of $L-\chi_1(\tau)$, by a transcendental element $\Upsilon_s$, see Theorem \ref{thm-Bron2}. We prove the next result.

\para 

{\bf Theorem D.} %(Theorem \ref{thm-decomp})  
{\it 
Let $L_s$ be an algebro-geometric Schr\" odinger operator with rational spectral curve $\Gamma_s$.
The Liouvillian extension $K(\tau)\langle \Upsilon_s\rangle$ of $K(\tau)$, by a nonzero solution $\Upsilon_s$ of  $(\partial-\tilde{\phi}_s)\Upsilon=0$
is isomorphic to a spectral Picard-Vessiot field over the curve $\Gamma_s$ of the equation $(L_s-\lambda)(\Psi)=0$.
}

\para

We can finally show the advantages of constructing solutions of the spectral problem \eqref{eq-problem} using a global rational parametrization of the spectral curve in $\bbC(\tau)^2$, instead of a local parametrization by Puiseux series in  $\bbC\langle\langle \tau \rangle\rangle^2$.  The coefficient field $K(\tau)$, where $\tau$ is now a free parameter, allows to say much more about the hyperexponential $\Upsilon_s$. Whenever $u_s$ is transcendental over $C(\tau)$ the algebraic integration algorithms in \cite{Bron} would allow to compute $\Upsilon_s$, see Remark \ref{rem-symbolicint}.
We illustrate this fact by means of a family of Rosen-Morse potentials in Example \ref{example}.

\para

 Moreover, we established the appropriate algebraic setting to solve the spectral problem \eqref{eq-problem} {analytically} for rational curves, possibly with singularities. Whenever the potential $u=u(x)$ is an analytic potential in some complex domain, we describe the analytic character of the common solution of problem \eqref{eq-problemparametrico} in Theorem \ref{thm-analytic}.

\para 

{To finish, recall that for an algebro-geometric potential $u_s$, the spectral parameter $\lambda$ of pro\-blem (1) is not a free parameter.
On the contrary,} the original spectral problem \eqref{eq-problem} in one-parameter form \eqref{eq-problemparametrico}, where $\tau$ is a free parameter, falls into the recently developed parametrized Picard-Vessiot theory  (see for example \cite{CS}, \cite{HMO}, \cite{Arr}, \cite{MS}), and this provides  new lines of research on the parametric behavior of the solutions of \eqref{eq-problem}, see Section \ref{sec-Conclusiones}.

\vskip1cm

\noindent {\it The paper is organized as folllows.} Section \ref{sec-alg-geom} presents the relation  between algebro-geometric potentials, centralizers and the $KdV$ hierarchy (proving Theorem {\bf A}). Section \ref{sec-spectral curve} establishes the field of constants of the field $K(\Gamma )$, with $\Gamma$ the spectral curve (Theorem {\bf  B} is part of Theorem \ref{thm-constants}). {\it Spectral Picard-Vessiot fields}, are defined and explicitly constructed in Section \ref{sec-Picard-Vessiot} (Theorem \ref{thm-Bron} and Theorem {\bf C}). We recover the (classical) Picard-Vessiot extensions at each value $\lambda=\lambda_0$; this is done in Section \ref{sec-specialization}, by theorems \ref{thm-classPV1} and \ref{thm-classPV2}. For rational spectral curves we prove Theorem {\bf D} in Section \ref{sec-Parametric}. The existence of a free parameter $ \tau $, allows to use symbolic integration algorithms. We illustrate the obtained  results by a family of Rosen-Morse potentials in Example \ref{example}. In this situation, $\lambda = \lambda (\tau)$, and Theorem \ref{thm-analytic}  establishes the analyticity of the common solution in a domain of $\mathbb{C}^2$, around almost every point $(x, \tau)$. Some concluding remarks are contained in Section \ref{sec-Conclusiones}.

\vskip1cm

\noindent {\bf Notation.} For concepts in differential algebra and differential Galois theory we refer the reader to \cite{CH}, \cite{VPS} or \cite{Morales}.
Let us consider algebraic variables $\lambda$ and $\mu$ with respect to $\partial$. Thus $\partial \lambda=0$ and $\partial \mu=0$ and
we can extend the derivation $\partial$ of $K$ to the polynomial ring $K[\lambda, \mu]$.
Hence $(K[\lambda, \mu],\partial)$ is a differential ring whose ring of constants is $C[\lambda, \mu]$. Given a differential commutative ring $R$ with (non trivial) derivation $\partial$,
let us denote by $R[\partial]$ the ring of differential operators with coefficients in $R$ and commutation rule $[\partial,a]=\partial a-a\partial=\partial(a)$, $a\in R$,
where $\partial a$ denotes the product in the (noncommutative) ring  $R[\partial]$ and $a'=\partial (a)$ is the image of $a$ by the derivation map.

%%%%%%%%%%%%%%%%%%%%%%%%
\section{{Algebro-geometric  operators of second order }}\label{sec-alg-geom}
%%%%%%%%%%%%%%%%%%%%%%%%%%%%%%%%%%%

Let us consider a differential operator $L$ with coefficients in a differential field $(\Sigma,\partial)$, whose field of constants $C$ is algebraically closed and of characteristic zero. There are several characterizations of algebro-geometric operator, see for instance \cite{We}. We  state next what we use as the base characterization of algebro-geometric operators for this work, the Burchnall and Chaundy Theorem \cite{BC}, {adapted from \cite{We}}.
We consider the nontrivial case of operators $L\not\in C[\partial]$.

\begin{thm}\label{thm-algegeom}
Let $L$ be an order $n$ differential operator in $\Sigma[\partial]\backslash C[\partial]$. The following are equivalent:
\begin{enumerate}
    \item $L$ is an algebro-geometric operator.
    \item There exists an operator $P$ in $\Sigma[\partial]$ of order $m$, relatively prime with $n$, and a polynomial $f(\lambda,\mu)=\mu^n+R_m(\lambda)$ in $C[\lambda,\mu]$, with $R_m$ of degree $m$, such that $f(L,P)=0$.
    \item There exists an operator $P$ in $\Sigma[\partial]$ of order $m$, relatively prime with $n$, such that $[L,P]=0$.
\end{enumerate}
\end{thm}

In the case of second order differential operators we would like to highlight the structure of the centralizer $\cC(L)$ of $L$ in the ring of differential operators $\Sigma[\partial]$. 

\begin{thm}\label{thm-agCen}
Let $L$ be a second order  differential operator in $\Sigma[\partial]$. The following are equivalent:
\begin{enumerate}
    \item $L$ is an algebro-geometric operator.
    \item The centralizer of $L$ is nontrivial, $\cC(L)\neq C[L]$. More precisely, there exists {a unique monic operator $A_{2s+1}$ of minimal order $2s+1$ such that $\cC(L)=C[L,A_{2s+1}]$ and $A^2_{2s+1}+R_{2s+1}(L)=0$, with $R_{2s+1}(\lambda)$ in $C[\lambda]$ of degree $2s+1$.}
\end{enumerate}
\end{thm}
\begin{proof}
Whenever the centralizer is nontrivial, $\cC(L)\neq C[L]$, by \cite{Good}, Theorem 1.2, there exists an operator {$X_{2s+1}$} of minimal order $2s+1$ in the centralizer such that $\cC(L)$ equals the free $C[L]$-module of rank $2$ with basis $\{1, X_{2s+1}\}$, that is
\begin{equation}\label{eq-centralizer}
    \cC(L)=\{p_0(L)+p_1(L)X_{2s+1}\mid p_0,p_1\in C[L]\}=C[L]\langle 1,X_{2s+1}\rangle.
\end{equation}
Thus $\cC(L)$ equals the $C$-algebra $C[L,X_{2s+1}]$ generated by $L$ and $X_{2s+1}$. 

{In addition, any operator of the form $P=X_{2s+1} +p_0(L)$, with $p_0$ of degree $d\leq s$ is a generator of $\cC(L)=C[L,P]$. By \cite{Good}, Theorem 1.13,} 
{there exist $a,b\in C[\lambda]$ such that $X_{2s+1}^2 =a (L ) X_{2s+1} +b(L )$. Since $P^2=(X_{2s+1}+p_0(L))^2$ and $\{1, X_{2s+1}\}$ is a basis as $C(L)$ module of the centralizer, we obtain that for $p_0=-a/2$ then $P=X_2-(1/2)a(L)$ satisfies $P^2+R_{2s+1}(L)=0$ where $R_{2s+1}(\lambda)=-b(\lambda)-(1/4)a(\lambda)^2$ must be a polynomial of degree $2s+1$. Observe that $P$ is unique up to multiples $\alpha P$, $\alpha\in C$, so we choose the monic $A_{2s+1}$.} 
\end{proof}

We introduce next the terminology that will be useful to explain our contribution to the extensively studied commuting pair  $L,A_{2s+1}$.

\begin{defi}
Given an algebro-geometric operator $L$ of second order  we call the 
{monic ope\-rator $A_{2s+1}$ of minimal order $2s+1$ in $\cC(L)$ such that $\cC(L)=C[L,A_{2s+1}]$ and $A^2_{2s+1}+R_{2s+1}(L)=0$, with $R_{2s+1}(\lambda)$ in $C[\lambda]$ of degree $2s+1$},  {\sl the partner} of $L$ and say that $L$ is an {\it algebro-geometric operator of level $s$} and denote it by $L_s$.
\end{defi}

%%%%%%%%%%%%%%%%%%%%%%%%%%%%%%%%%%%%%%%%%%%%%%%%%%%%%%%%%%%%%%%%%%%%%%
\para

Let us assume now that the algebro-geometric operator $L_s$ of level $s$ is in normal form, see \cite{We}, (2). Thus $L_s$ is a Schr\" odinger operator
\[L_s=-\partial^2+u_s, \mbox{ with }u_s\in \Sigma.\]
We prove in Appendix \ref{sec-KdV} that $u_s$ verifies one of the equations of the celebrated KdV hierarchy. This is probably a well known result, but we could not find a proof of it, so we include the proof there for completion. 

\para

{\bf Proof of Theorem A. }
%(see Theorem \ref{thm-agCen} and Corollary \ref{cor-KdV})\  
{\it By Theorem \ref{thm-agCen}  and Appendix \ref{sec-KdV} the equivalences follow.}

\para

Given a Schr\" odinger operator $L_s$, to decide if it is algebro-geometric one has to look for a nontrivial commuting operator. To look for $A_{2s+1}$ one possibility is forcing the commutator of $L_s$ with an arbitrary operator of order $2n+1$ to be zero, starting with $n=1$.
The determination of the level $s$ is intrinsically related to the determination of the vector of integration constants ${\bf c}^s \in C^s$, see Theorem \ref{thm-partner}, and it is necessary for the effective computation of the partner $A_{2s+1}$ of $L_s$. The algorithmic treatment of these results can be found in \cite{MRZ}.

\para

{Observe that by Theorem \ref{thm-partner}, $A_{2s+1}$ is a differential operator with coefficients in the differential field $C\langle u_s\rangle$. Once we fix $u_s\in \Sigma$, we will work in the differential subfield   $K=C\langle u_s\rangle$ \ of $\Sigma$.}

%%%%%%%%%%%%%%%%%%%%%%%%%%%%%%%%%%%%%
\section{{The differential field of the spectral curve}}\label{sec-spectral curve}
%%%%%%%%%%%%%%%%%%%%%%%%%%%%%%%%%

Let us consider an algebro-geometric Schr\" odinger operator $L_s=-\partial^2+u_s$  of level $s$ and partner operator $A_{2s+1}$, as defined in Section \ref{sec-alg-geom}. 
{In other words, now we are fixing a KdV-potential $u_s$ in $\Sigma$ and considering commuting differential operators $L_s$ and $A_{2s+1}$ in $K[\partial]$ with $K=C\langle u_s\rangle$}. {By Theorem \ref{thm-agCen},} there exists a polynomial
\begin{equation}\label{pol-curva}
 f_s (\lambda , \mu )=\mu^2+R_{2s+1}(\lambda)
\end{equation}
in $C[\lambda,\mu]$ such that $f_s(L_s,A_{2s+1})=0$.  The constant coefficients polynomial $f_s$ is called the {\it Burchnall-Chaundy (BC) polynomial} of the pair $\{L_s,A_{2s+1}\}$, since the first result of this sort appeared in \cite{BC}. We denote by $\Gamma_s$ the affine algebraic curve in $C^2$ determined by $f_s(\lambda,\mu)=0$, which is called the {\it spectral curve of the pair} $\{L_s,A_{2s+1}\}$.

\para
{Recall that the {\it rank} of a pair of differential operators is the greatest common divisor of their orders, \cite{Wilson}. Observe that all pairs $\{L_s,A_{2s+1}\}$ studied in this paper are rank $1$ pairs. But also note that the same spectral curve could correspond to pairs of algebro-geometric operators with different rank. For instance the algebraic curve defined by $\mu^2-\lambda^3=0$ is the spectral curve of the rank $1$ pair
\[
L_1=-\partial^2+\frac{2}{x^2} \ \mbox{ and } \ A_3=\partial^3-\frac{3}{x^2}\partial+\frac{3}{x^3} \ ,
\]
in $C(x)[\partial]$ with $\partial=d/dx$, but it is also of the spectral curve of the famous pair of rank $2$ operators posted by Dixmier see \cite{PRZ},} {which moreover is a ``true rank" $2$ pair (see also \cite{PRZ} for the definition of ``true rank")
\begin{equation*}
H^{2} +2x\  \textrm{ and }  H^{2}+\frac{3}{2}(xH+Hx) \ , \mbox{ with } H=\partial^2+x^2.   
\end{equation*}}
{The case of rank $r>1$ corresponds to a vector bundle
of rank $r$ over the spectral curve.  These bundles are related with  the ``inverse'' spectral problem,  \cite{KN1, KN2}. A difficult interesting problem is to give new ``true rank" $r$ pairs. 
 Important contributions were made by Davletshina, Grinevich,  Mironov, Mokhov,   Oganesyan, Pogorelov Shamaev,   and Zheglov  (see \cite{PRZ}, and the references therein).}

\para

Traditionally BC polynomials are computed as characteristic polynomials \cite{GH}. We would like to point out that once $A_{2s+1}$ has been calculated, by Previato's Theorem (see \cite{Prev} or \cite{MRZ}, Theorem 5.4) one can compute $f_s$ by means of the differential resultant. Moreover, by \cite{Wilson} the differential resultant of $L_s-\lambda$ and $A_{2s+1}-\mu$ is related with the rank $r$ by

\begin{equation}\label{eq-sp-curve-DR}
  \dres(L_s-\lambda,A_{2s+1}-\mu)=f_s^r.  
\end{equation}
In Appendix \ref{subsec-subres} we summarize the definition and main properties of this tool.

\para

Observe that $f_s(\lambda,\mu)$ is an irreducible polynomial in $C[\lambda,\mu]$, since it has odd degree in $\lambda$ { and degree $2$ in $\mu$}. Let us denote by $(f_s)$ the prime ideal generated by $f_s$ in $C[\lambda,\mu]$ or $K[\lambda,\mu]$, abusing the notation and distinguishing them by the context. Let us consider the monomial lexicographical order with $\mu>\lambda$ in $C[\lambda,\mu]$. Given $p\in C[\lambda,\mu]$, let us denote by $p_N$ the normal form of $p$ with respect to $(f_s)$ ({that is, $ p_N $ is the remainder of dividing $ p $ by $ f_s $ in $ C [\lambda, \mu] $, }see \cite{Cox}). Observe that $p_N$ is a polynomial in $C[\lambda,\mu]$ of degree one in $\mu$. This reduction with respect to $(f_s)$ will allow us to prove the next two results.

\para

\begin{thm}\label{thm-elimination}
Let $I=(L_s-\lambda, A_{2s+1}-\mu)$ be the ideal generated by $L_s-\lambda$ and $A_{2s+1}-\mu$ in $K[\lambda,\mu][\partial]$. Let $(f_s)$ be the ideal generated by $f_s$ in $C[\lambda,\mu]$. The differential elimination ideal $I\cap C[\lambda,\mu]$ verifies
\begin{equation}\label{eq-elimIdeal}
(f_s)=I\cap C[\lambda,\mu]= \{p\in C[\lambda,\mu]\mid p(L_s,A_{2s+1})=0\}.
\end{equation}
\end{thm}
\begin{proof}
By Theorem \ref{thm-sresIdeal}, $f_s\in I$. Thus we have the next chain of inclusions of ideals in $C[\lambda,\mu]$
\[(f_s)\subseteq I\cap C[\lambda,\mu]\subseteq J=\{p\in C[\lambda,\mu]\mid p(L_s,A_{2s+1})=0\}.\]
We will prove that $J\subset (f_s)$ and therefore the equality holds.

Given $p\in J$, we can write $p=hf_s+p_N$, for $h\in C[\lambda,\mu]$. If we assume that $p_N$ is nonzero then $p_N=a(\lambda)+b(\lambda)\mu$, $a,b\in C[\lambda]$. Therefore
$a(L_s)=-b(L_s)A_{2s+1}$ which is a contradiction since $a(L_s)$ has even order and $b(L_s)A_{2s+1}$ has odd order. This proves that $p_N$ is identically zero and that $p\in (f_s)$.
\end{proof}

Let us denote by $C(\Gamma_s)$ and $K(\Gamma_s)$ the fraction fields of the domains
\begin{equation}\label{eq-domains}
C[\Gamma_s]=\frac{C[\lambda,\mu]}{(f_s)}\mbox{ and } K[\Gamma_s]=\frac{K[\lambda,\mu]}{(f_s)}
\end{equation}
respectively. Observe that $C(\Gamma_s)$ and $K(\Gamma_s)$ are usually interpreted as rational functions on {the algebraic curve $\Gamma_s$ defined by the polynomial $f_s $}. The next result gives a description of the centralizer of $L_s$ alternative to the famous one given by I. Schur \cite{Sch}, in terms of pseudodifferential operators. As a consequence the quotient field of the centralizer is a function field of one variable.

\begin{prop}\label{prop-centralizerCurve}
The centralizer $\cC_{\cD}(L_s)=C[L_s,A_{2s+1}]$ of $L_s$ in $\cD =C\langle u_s\rangle [\partial ]$  and the domain $C[\Gamma_s]$ are isomorphic commutative rings.
\end{prop}
\begin{proof}
Given $p+(f_s)$ in $C[\Gamma_s]$ it has a representative given by the normal form $p_N=a(\lambda)+b(\lambda)\mu$ of $p$ with respect to  $(f_s)$. By \eqref{eq-centralizer} we establish the isomorphism sending  $p+(f_s)$ to $a(L_s)+b(L_s)A_{2s+1}$.
\end{proof}

\para

In the history of this problem one can find different approaches that go from a local to a global treatment, "to work over the spectral curve". Note that the affine curve $\Gamma_s$ could have singular points. 

\para

One could fix a point $P_0=(\lambda_0, \mu_0)$ of $\Gamma_s$, in which case the differential operators
\begin{equation}\label{eq-localProb}
L_s-\lambda_0\mbox{ and }A_{2s+1}-\mu_0  \mbox{ belong to }K [\partial].
\end{equation}
 Hence one may even write $(\lambda_0, R_{2s+1}(\lambda_0)^{1/2})$, whenever $R_{2s+1}(\lambda_0)\neq 0$, as in \cite{GH} or \cite{Brez3}.

 In the seminal works of Krichever \cite{K77}, \cite{K}, the formal Baker-Akheizer function is given using a local parametrization $(\tau^2, \mu(\tau))$. See also \cite{Wilson}, \cite{Yagasaki}. Local parametrizations around a point $P_0$  are generally obtained as Puiseux series, see for instance \cite{SWP}, Section 2.5. They always exist in the field of Puiseux series $C\langle\langle \tau \rangle\rangle$ and the differential operators
\begin{equation}\label{eq-PuiseuxProb}
L_s-\tau^2\mbox{ and }A_{2s+1}-\mu(\tau) \mbox{ belong to }  K \langle\langle \tau \rangle\rangle [\partial].
\end{equation}
Their analytical behavior depends  on the type of point of the curve (singular or regular). Those local expansions would allow a local parametric study of the spectral problem \eqref{eq-PuiseuxProb}.

In the visionary works of Burchnall and Chaundy \cite{BC} and \cite{BC2}, the attention is driven towards the case of singular curves, for which their results regarding Abelian equations are no longer valid.
In \cite{BC2} they analyze cuspidal curves defined by $\mu^n-\lambda^m=0$, $n,m$ coprime, by means of the global parametrization $(\chi_1(\tau),\chi_2(\tau))=(\tau^m ,\tau^n )$.

\para

We propose two new approaches "to work over the spectral curve".
First, we consider $\lambda$ and $\mu$ as generic variables and assume that
\begin{equation}\label{eq-globalProb1}
L_s -\lambda \ , \ A_{2s+1} -\mu \mbox{ belong to }
K (\Gamma_s) [\partial].
\end{equation}
As operators in $K[\lambda,\mu][\partial]$, by Remark \ref{rem-gcd}, (1), they do not have a common factor, since $\dres(L_s-\lambda,A_{2s+1}-\mu)$ is nonzero, it equals the BC polynomial $f_s$. Considering them as operators in $K (\Gamma_s) [\partial]$, they have a common right factor, see Section \ref{sec-Picard-Vessiot}.

Second, in the case of rational curves, see Section \ref{sec-Parametric}, we establish the coupling governed by a global parameterization  $(\chi_1(\tau),\chi_2(\tau))$ in $C(\tau)^2$ and consider that differential operators
\begin{equation}\label{eq-globalProb}
L_s -\chi_1(\tau) \ , \ A_{2s+1} -\chi_2(\tau) \mbox{ belong to }
K (\tau) [\partial].
\end{equation}
Observe that $K(\tau)$ is a much smaller field than $K \langle\langle \tau \rangle\rangle$.
The possibility of obtaining a global parametrization depends on the genus $g$ of the curve $\Gamma_s$ and ultimately of its singular locus, see for instance \cite{BeardonNg}. If the curve $\Gamma$ is rational, $g=0$, there are symbolic algorithms to obtain a global parametrization \cite{SWP}. If $\Gamma_s$  is an elliptic curve, $g=1$, it can be parametrized  by elliptic functions in $\bbC$, \cite{Shafarevich}. For $g\geq 2$ this is a difficult open problem, some contributions have been made in this direction, for instance by Y.V. Brezhnev in \cite{Brez2}.

\vskip1cm

We can properly describe now the problem solved in this paper. We work with an algebro-geometric Schr\" odinger operator
\begin{equation}\label{eq-Problem}
    L_s-\lambda\,\,\, \mbox{ in }K(\Gamma_s)[\partial]
\end{equation}
to define, in Section \ref{sec-Picard-Vessiot}, the minimal field extension of $K(\Gamma_s)$ that contains the solutions of\break $(L_s-\lambda)\Psi=0$. In the remaning of this section we prove that $C(\Gamma_s)$ is the field of constants of $K(\Gamma_s)$.

\para

We extend the derivation $\partial$ of $K$ to the polynomial ring $K[\lambda, \mu]$ by 
\begin{equation}\label{eq-derpol}
\partial \left(\sum a_{i,j} \lambda^i\mu^j\right)=\sum \partial (a_{i,j}) \lambda^i\mu^j,\,\,\, a_{i,j}\in K
\end{equation}
with ring of constants $C[\lambda, \mu]$.
Let $(f_s)$ be the ideal generated by $f_s$ in $K[\lambda,\mu]$. Observe that  for any $p\in K[\lambda,\mu]$ it holds that
\begin{equation}
\partial (pf_s)=\partial(p) f_s+ p \partial (f_s)=\partial(p) f_s
\end{equation}
since $f_s\in C[\lambda,\mu]$. This implies that $(f_s)$ is a differential ideal in $(K[\lambda,\mu],\partial)$.
Let us consider the domains in \eqref{eq-domains} and observe that $C[\Gamma_s]\hookrightarrow K[\Gamma_s]$.
Secondly we consider the standard  differential structure of the quotient ring $K[\Gamma_s]$ given by the following:
\begin{equation}\label{eq-PartialExtended}
\tilde {\partial}(q+(f_s))=\partial(q)+(f_s), \,\,\, q\in K[\lambda,\mu].
\end{equation}
Observe that $\tilde {\partial}$ is a derivation in $K[\Gamma_s]$ since $(f_s)$ is a differential ideal. By abuse of notation we also denote by $\tilde {\partial}$ its extension to the fraction field $K(\Gamma_s)$. The next commutative diagram summarizes the situation:
\[
\xymatrix{
K[\Gamma_s]\,\, \ar@{^(->}[r] & \,\, K(\Gamma_s) \\
C[\Gamma_s]\,\,  \ar@{^(->}[u] \ar@{^(->}[r] & \,\, C(\Gamma_s) \ar@{^(->}[u]
}
\]

\para

To work in the ring $K[\Gamma_s]$ we will consider special representatives of its elements fixing the monomial lexicographical order with $\mu>\lambda$ in $K[\lambda, \mu]$. Given $p+(f_s)\in K[\Gamma_s]$, with representative $p\in K[\lambda,\mu]$, let us denote by $p_N$ the normal form of $p$ with respect to $(f_s)$ (see \cite{Cox})
%, that is, $p_N$ can be obtained by replacing $\mu^2$ by $R_{2s+1}(\lambda)$ in $p$
. We will call $p_N$ the {\it normal form of $p$ on $\Gamma_s$}. Observe that $p_N$ is a polynomial in $K[\lambda,\mu]$ of degree one in $\mu$. The following observations will be very important in what follows.
\begin{rem}\label{rem-factorLambda}
Given $q$ and $h$ polynomials in $K[\lambda,\mu]$ of degree one in $\mu$.
\begin{enumerate}
\item  If $h$ is a factor of $q$ then $q=\beta h$ for some nonzero $\beta$ in $K[\lambda]$.

\item  $q=a+b\mu$ is irreducible in $K[\lambda,\mu]$ if and only if $\gcd(a,b)=1$.

\item Given $q$ of degree one in $\mu$, we can factor it as $q=\Lambda \hat{q}$ where $\Lambda \in K[\lambda]$ and $\hat{q}$ is irreducible of degree one in $\mu$.
\end{enumerate}
\end{rem}

\begin{prop}\label{lem-unit}
Let $q$ be a polynomial in $K[\lambda,\mu]$ of degree greater or equal than one in $\mu$.
\begin{enumerate}
\item Let $q+(f_s)$ be a nonzero element in $K[\Gamma_s]$. There exists {a nonzero } $T\in K[\lambda,\mu]$ and a nonzero $\Lambda\in K[\lambda]$ such that $Tq+(f_s)=\Lambda+(f_s)$.\label{item-lem-unit}

\item Every nonzero $\frac{p}{q}$ in $K(\Gamma_s)$ equals $\frac{r}{\Lambda}$ in $K(\Gamma_s)$ where $r\in K[\lambda,\mu]$ and $\Lambda\in K[\lambda]$.
\end{enumerate}
\end{prop}
\begin{proof}
The polynomials $q$ and $f_s$ can be seen as polynomials in $\mu$ with coefficients in the field $K(\lambda)$. By hypothesis, $f_s$ does not divide $q$ in $K[\lambda,\mu]$. This implies that $f_s$ does not divide $q$ in $K(\lambda)[\mu]$, { since $f_s$  is monic by \eqref{eq-sp-curve-DR};} therefore they are coprime. There exist $A,B\in K(\lambda)[\mu]$ such that $Aq+Bf_s=1$, see \cite{Cox}, Chapter 1, \S 5, Proposition 6. We can write $A(\lambda,\mu)=g(\lambda,\mu)/a(\lambda)$ and $B(\lambda,\mu)=h(\lambda,\mu)/b(\lambda)$, with $g,h\in K[\lambda,\mu]$ and nonzero $a,b\in K[\lambda]$. Thus $bgq+ahf_s=ab$ in $K[\lambda,\mu]$. In $K[\Gamma_s]$ we have
\begin{equation}\label{eq-unit}
(T+(f_s))(q+(f_s))=\Lambda(\lambda)+(f_s),\mbox{ with }T=bg\mbox{ and }\Lambda=ab.
\end{equation}
Given a nonzero $p+(f_s)$ in $K[\Gamma_s]$ from \eqref{eq-unit} we have $Tqp+(f_s)=\Lambda p+(f_s)$ thus $\frac{p}{q}$ equals $\frac{Tq}{\Lambda}$ in $K(\Gamma_s)$ and for $r=Tq$ statement 2 follows.
\end{proof}

{We are now ready to prove Theorem B.}

\begin{thm}\label{thm-constants} %Escribirlo en general, para una curva, que verifique las hip\'{o}tesis necesarias.
\begin{enumerate}
\item The ring of constants of $(K[\Gamma_s],\tilde{\partial})$ is $C[\Gamma_s]$.
\item The field of constants of $(K(\Gamma_s),\tilde{\partial})$ is $C(\Gamma_s)$.
\end{enumerate}
\end{thm}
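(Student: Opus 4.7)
The plan is to reduce both statements to two ingredients: (a) the uniqueness of a normal form modulo $f_s$, and (b) the classical fact that the $\partial$-constants of $K(\lambda)$ (with $\partial\lambda=0$) are exactly $C(\lambda)$.

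For part (1), I would exploit that $f_s=-\mu^2-R_{2s+1}(\lambda)$ is monic of degree two in $\mu$: every class in $K[\Gamma_s]$ therefore has a unique normal form representative $q_N=a(\lambda)+b(\lambda)\mu$ with $a,b\in K[\lambda]$ (reduction being $\mu^2\mapsto -R_{2s+1}(\lambda)$, and uniqueness following from a degree argument since the difference of two normal forms has $\mu$-degree at most one while $f_s$ has $\mu$-degree two). Because $\tilde\partial$ fixes $\lambda$ and $\mu$, the element $\tilde\partial q_N=\partial(a)+\partial(b)\mu$ is again in normal form. Hence $\tilde\partial(q+(f_s))=0$ iff $\partial a=0$ and $\partial b=0$ in $K[\lambda]$, iff (since $\partial$ acts coefficient-wise on $K[\lambda]$) $a,b\in C[\lambda]$, iff $q_N\in C[\lambda,\mu]$. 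This identifies the kernel with $C[\Gamma_s]$.

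For part (2), I would take $\alpha\in K(\Gamma_s)$ with $\tilde\partial\alpha=0$. By Proposition \ref{lem-unit}(2) I may write $\alpha=r/\Lambda$ with $r\in K[\lambda,\mu]$ and nonzero $\Lambda\in K[\lambda]$, and replacing $r$ by its normal form I may assume $r=a+b\mu$ with $a,b\in K[\lambda]$. Note that $\Lambda$ remains nonzero in $K[\Gamma_s]$ because its $\mu$-degree is strictly smaller than that of $f_s$. Using the quotient rule and clearing by $\Lambda^2$, the equation $\tilde\partial\alpha=0$ becomes the normal-form identity
\[
(\Lambda\,\partial a-a\,\partial\Lambda)+(\Lambda\,\partial b-b\,\partial\Lambda)\,\mu=0
\]
in $K[\Gamma_s]$. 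By the uniqueness used in part (1), this forces $\Lambda\,\partial a=a\,\partial\Lambda$ and $\Lambda\,\partial b=b\,\partial\Lambda$ as equalities in $K[\lambda]$, that is, $a/\Lambda$ and $b/\Lambda$ are $\partial$-constants of $K(\lambda)$.

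To finish I would invoke the standard fact that $\ker(\partial\mid_{K(\lambda)})=C(\lambda)$ (easily reproved via a reduced-fraction argument: if $p/q$ with $\gcd(p,q)=1$ is a constant, then $q\,\partial p=p\,\partial q$ together with a degree count in $\lambda$ yields $\partial p=cp$, $\partial q=cq$ for a common $c\in K$, and comparing coefficients of $\lambda^i$ shows that $p$ and $q$ are each a $K^\times$-multiple of a polynomial in $C[\lambda]$, the two multipliers cancelling in the quotient). Writing $a/\Lambda=\tilde a/\tilde\Lambda_1$ and $b/\Lambda=\tilde b/\tilde\Lambda_2$ with $\tilde a,\tilde b,\tilde\Lambda_1,\tilde\Lambda_2\in C[\lambda]$, substitution gives
\[
\alpha=\frac{\tilde a\,\tilde\Lambda_2+\tilde b\,\tilde\Lambda_1\,\mu}{\tilde\Lambda_1\,\tilde\Lambda_2}\in C(\Gamma_s).
\]
The reverse inclusion $C(\Gamma_s)\subseteq\ker\tilde\partial$ is immediate from part (1) by passing to fraction fields. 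I expect the main technical point to be the coefficient-wise comparison in $\mu$: one must check that $\Lambda$ is a nonzero-divisor in $K[\Gamma_s]$ to legitimize the clearing step, and then leverage the constants-of-$K(\lambda)$ lemma to land $\alpha$ in $C(\Gamma_s)$ rather than merely in a $C(\lambda)$-algebra sitting between $C(\Gamma_s)$ and $K(\Gamma_s)$.
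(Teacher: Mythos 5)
Your proposal is correct, and part 1 is exactly the paper's argument: normal forms $a+b\mu$ modulo $f_s$, together with the observation that an element of $(f_s)$ of $\mu$-degree at most one must vanish. For part 2 the skeleton is also the paper's---write the constant as $p/\Lambda$ with $\Lambda\in K[\lambda]$ via Proposition~\ref{lem-unit}, clear denominators, and reduce to the fact that the $\partial$-constants of $K(\lambda)$ are $C(\lambda)$---but your extraction step is genuinely more direct. The paper distinguishes the cases $p\in K[\lambda]$ and $p\notin K[\lambda]$; in the second case it shows $p_N$ divides $\partial(p_N)$, uses Remark~\ref{rem-factorLambda} to get $\partial(p_N)=\beta p_N$ with $\beta\in K[\lambda]$, deduces that $a$ and $b$ solve the same equation $\partial\Psi=\beta\Psi$, hence $a=cb$ with $c\in C(\lambda)$, and only then reduces to the first case through the auxiliary element $w$. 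You instead split the single cleared identity $(\Lambda\,\partial a-a\,\partial\Lambda)+(\Lambda\,\partial b-b\,\partial\Lambda)\mu=0$ into its $\mu^0$ and $\mu^1$ components (legitimate by the same $\mu$-degree/uniqueness argument), concluding at once that $a/\Lambda$ and $b/\Lambda$ lie in $C(\lambda)$ and hence that $\alpha=a/\Lambda+(b/\Lambda)\mu\in C(\Gamma_s)$; this eliminates the case split, the divisibility step, and the final reduction, at the modest cost of spelling out the lemma $\ker\bigl(\partial|_{K(\lambda)}\bigr)=C(\lambda)$, which the paper invokes implicitly. Two small points: in your reduced-fraction lemma the two multipliers $p_{i_0}$ and $q_{j_0}$ need not literally cancel---their ratio is a constant in $C$, which is all you need---and your check that $\Lambda$ is a nonzero element (hence invertible in the fraction field, $f_s$ being irreducible so that $K[\Gamma_s]$ is a domain) is indeed the right point to make the clearing by $\Lambda^2$ legitimate.
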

\begin{proof}
Let us consider $p+(f_s)$ in $K[\Gamma_s]$ such that $\tilde{\partial} (p+(f_s))=0$. Let $p_N=a+b\mu$ be its normal form on $\Gamma_s$, then $\partial (a)+\partial (b)\mu \in (f_s)$ in $K[\lambda,\mu]$. Then $\partial(a)=\partial(b)=0$. Hence $p+(f_s)$ belongs to $C[\Gamma_s]$, which proves statement 1.

Let us also consider $v\in K(\Gamma_s)$ such that $\tilde{\partial}(v)=0$. By Proposition \ref{lem-unit}, 2, we have
$v=\frac{p}{\Lambda_1}$ in $K(\Gamma_s)$, with $p\in K[\lambda,\mu]$ and $\Lambda_1\in K[\lambda]$.

If $p=\Lambda_2\in K[\lambda]$ then
$\tilde{\partial}(v)=\frac{H}{\Lambda_1^2}$ in $K(\Gamma_s)$  with $H=\partial(\Lambda_2)\Lambda_1-\partial(\Lambda_1)\Lambda_2.$
Thus $0=\Lambda_1^2\tilde{\partial}(v)=H+(f_s)$. Since $H$ is a polynomial in $K[\lambda]$ then $H=0$. Hence $\Lambda_2=\gamma \Lambda_1$, with $\gamma$ in the field  of constants $C(\lambda)$ of $K(\lambda)$, and $v=\gamma$ in $K(\Gamma_s)$. Consequently $v\in C(\Gamma_s)$.

If $p\notin K[\lambda]$, by Remark \ref{rem-factorLambda} we can write $v=\frac{p_N}{\Lambda_1}$ in $K(\Gamma_s)$, where the normal form on $\Gamma_s$ of $p$ equals $p_N=a+b\mu$. Now $0=\Lambda_1^2\tilde{\partial}(v)=H+(f_s)$, with
\[
H=\Lambda_1\partial(p_N)-\partial(\Lambda_1)p_N.
\]
Since the degree in $\mu$ of $H$ equals one then $H=0$ in $K[\lambda,\mu]$. This implies that $p_N$ divides $\partial(p_N)$ and by Remark \ref{rem-factorLambda}, $\partial(p_N)=\beta p_N$, with $\beta\in K[\lambda]$. Thus $a$ and $b$ are solutions of the linear differential equation $\partial (\Psi)=\beta(\lambda) \Psi$ then $a=c b$, $c=c_1/c_2$ with $c_1,c_2\in C[\lambda]$. Therefore $v=\frac{b(c_2+\mu)}{c_1\lambda_1}$ in $K(\Gamma_s)$ and
\[0=\tilde{\partial}(v)=(c_1+\mu)\tilde{\partial}(w)\Longleftrightarrow \tilde{\partial}(w)=0,\mbox{ with }w=\frac{b}{c_1\Lambda_1}\mbox{ in }K(\Gamma_s).\]
Thus  $w\in C(\Gamma_s)$, which proves that $v\in C(\Gamma_s)$, and statement 2 is proved.
\end{proof}

%%%%%%%%%%%%%%%%%%%%%%%%%%%%%%%%%%%%%%%%%%%%%%%%%%%%%%%%%%%%%%%%%%%%%%%%%%%%%%%%%%%%%%%%%%%
\section{{Spectral Picard-Vessiot fields }}\label{sec-Picard-Vessiot}
%%%%%%%%%%%%%%%%%%%%%%%%%%%%%%%%%%%%%%%%%%%%%%%%%%%%%%%%%%%%%%%%%%%%%%%%%%%%%%%%%%%%%%%%%%%

We are ready now to introduce the main concept of this paper, the {\it spectral Picard-Vessiot (PV) field} of the equation
\begin{equation}\label{eq-problem4}
(L_s-\lambda)\Psi=0,
\end{equation}
where, as before, $L_s =-\partial^2+u_s$ is an algebro-geometric Schr\" odinger operator of level $s$. We consider
 $(L_s-\lambda)(\Psi)=0$ as an homogeneous linear differential equation of second order  with coefficients in $(K(\Gamma_s),\tilde{\partial})$. Since $\tilde{\partial}$ extends the derivation $\partial$ of $K$,  as can be deduced from \eqref{eq-PartialExtended}, when there is no room for confusion we write $\partial $ instead of $\tilde{\partial}$.

\para 

Let us  recall the definition of Picard-Vessiot extension following Kaplansky (see \cite{Ka}, \cite{VPS} or \cite{CH} for instance).

{\begin{defi}\label{def-Kaplanski-PV}
Let $y^{(n)}+a_{n-1}y^{(n-1)}+\cdots +a_1 y' +a_0 y =0$ be a linear homogeneous differential equation with coefficients in the differential field $\Sigma$.
We say that a differential field $\cE$ containing $\Sigma$ is a {\it Picard-Vessiot extension of $\Sigma$ for the above equation}, if the following conditions are satisfied:
\begin{enumerate}
\item $\cE=\Sigma <u_1 ,\dots ,u_n >$,  the differential field extension of $\Sigma$ generated by a fundamental set of solutions $\{u_1 ,\dots ,u_n \}$ of $y^{(n)}+a_{n-1}y^{(n-1)}+\cdots +a_1 y' +a_0 y =0$.
 \item $\cE$ and $\Sigma$ have the same field of constants.
\end{enumerate}
\end{defi}}

In this paper we use Picerd-Vessiot extensions as in Definition \ref{def-Kaplanski-PV}, but, since $L_s$ is an algebro-geometric operator we will be able to give a precise description of its Picard-Vesiot field in connection with its  spectral curve. This particular field structure we will call {\it spectral Picard-Vessiot  field over the curve}. { Due to its importance for this paper we present next the definition.}

\para 

\begin{defi}\label{def-PV}
A differential field extension $\cE$ of $K(\Gamma_s)$ is called a {\it spectral Picard-Vessiot  field over the curve} $\Gamma_s$ of the equation $(L_s-\lambda)(\Psi)=0$ if the following conditions are satisfied:
\begin{enumerate}
\item $\cE=K(\Gamma_s)\langle \Psi_1,\Psi_2\rangle$,  the differential field extension of $K(\Gamma_s)$ generated by
 $\Psi_1,\Psi_2$, where $\{\Psi_1,\Psi_2\}$ is a fundamental set of solutions of $(L_s-\lambda)(\Psi)=0$.

\item $\cE$ and $K(\Gamma_s)$ have the same field of constants $C(\Gamma_s)$.
\end{enumerate}
\end{defi}

\para

{ Afterwards, we will prove its existence  highlighting the importance of its field of constants $C(\Gamma_s)$ and proving that it is a transcendental Liouvillian extension of $ K(\Gamma_s)$. }

\para 

{For the convenience of the reader, we recall some definitions from differential algebra.  Let $K$ be a differential field and $\Sigma$ a differenttial extension of $K$. An element $t$ in $\Sigma$ is a {\it primitive over $K$ }if $Dt\in K$; the element $t$ is an {\it hyperexponential over $K$} if $Dt/t \in K$; and $t$ is called {\it Liouvillian element of $\Sigma$} if $t$ is either algebraic, or a primitive or an hyperexponential over $K$. The field $\Sigma$ is a  {\it  Liouvillian extension of $K$} if  there are $t_1 ,\dots ,t_n$ in $\Sigma$  such that $\Sigma = K(  t_1 ,\dots ,t_n )  $ and $t_i$
is Liouvillian over $K(t_1 ,\dots ,t_{i-1})$ for $i=1, \dots , n$.}

\para

{The next proposition shows that, as an element of $K(\Gamma_s)[\partial]$, the algebro-geometric operator $L_s-\lambda$ has an intrinsic order one (right) factor,}  {that is linked to the (unique) partner $A_{2s+1}$ of $L_s$.}

\para

\begin{prop}\label{thm-L1}
The monic greatest common (right) divisor of the differential operators $L_s-\lambda$ and $A_{2s+1}-\mu$ in $K(\Gamma_s)[\partial]$ is the order one operator
\[\partial-\phi_s,\mbox{ where }
\phi_s=\frac{\mu+\alpha(\lambda)}{\varphi(\lambda)},\]
for $\alpha$ and $\varphi$  nonzero polynomials in $K[\lambda]$. Moreover $\phi_s$ is nonzero in $K(\Gamma_s)$ and {we call $\partial-\phi_s $ {\sl the intrinsic right factor of} $L_s -\lambda$}.
\end{prop}
\begin{proof}
As defined in Appendix \ref{subsec-subres}, let us consider the differential resultant $G_0$ and first subresultant $G_1$ of $L_s-\lambda$ and $A_{2s+1}-\mu$ in \eqref{eq-G0} and \eqref{eq-G1} respectively.
Since $G_0$ is zero in $K(\Gamma_s)$ by Theorem \ref{thm-subres} the greatest common factor of $L_s-\lambda$ and $A_{2s+1}-\mu$ is nontrivial. In addition $G_1$ is an order one differential operator
\[G_1=(-\mu-\alpha(\lambda))+\varphi(\lambda) \partial\]
where $\alpha$ and $\varphi$ are nonzero polynomials in $K[\lambda]$ and  by Theorem \ref{thm-subres} it is the greatest common right divisor.

We will also write $\phi_s$ to denote the element $\phi_s$ in $K(\Gamma_s)$.
Observe that $\phi_s=0$ in $K(\Gamma_s)$ if and only if $
\mu+\alpha+(f_s)=0$ in $K[\Gamma_s]$. But this is not possible since $f_s$, which has degree $2$ in $\mu$, is not a factor of $\mu+\alpha$ in $K[\lambda,\mu]$. This proves the last claim. 
\end{proof}

\begin{rem} We proved Proposition \ref{thm-L1} using the Differential Subresultant Theorem \ref{thm-subres}. In addition the first subresultant $G_1=\varphi_2 \partial+\varphi_1$, see \eqref{eq-G1}, of $L_s-\lambda$ and $A_{2s+1}-\mu$ can be used to compute the factor $\partial-\phi_s$, see \cite{MRZ} for an algorithmic approach.
\end{rem}

We have the next factorization over the spectral curve
\begin{equation}\label{eq-factKGamma}
    L_s-\lambda=(-\partial-\phi_s)(\partial-\phi_s),\mbox{ in }K(\Gamma_s)[\partial],
\end{equation}
{since, once this right factor is set, the only possibility as a left factor  is $-\partial-\phi_s$.}
Let us define $\phi_+:=\phi_s$ before obtaining another factorization of $L_s-\lambda$. Observe that the BC-polynomial of $L_s-\lambda$ and  $A_{2s+1}+\mu$ is also  $f_s (\lambda,-\mu)=f_s (\lambda,\mu)$.
Applying Proposition \ref{thm-L1},  we obtain another factorization of $L_s-\lambda$, namely
\begin{equation}
L_s-\lambda=(-\partial-\phi_-)(\partial-\phi_-),\mbox{ in }K(\Gamma_s)[\partial],
\end{equation}
with
\[
\phi_-=\frac{-\mu+\alpha(\lambda)}{\varphi(\lambda)}.
\]
Both $\phi_+ $ and $\phi_- $ are solutions of the Riccati equation $ \partial (\phi)+\phi^2=u_s +\lambda$ with coefficients in $K(\Gamma_s )$.

Nonzero solutions $\Psi_+$ and $\Psi_-$ of $(L_s-\lambda)(\Psi)=0$ are defined respectively by the differential relations
\begin{equation}\label{eq-Psi}
\partial(\Psi_+)=\phi_+\Psi_+\mbox{ and }\partial(\Psi_-)=\phi_-\Psi_-
\end{equation}
and hence $\Psi_+$ and $\Psi_-$ belong to the differential closure of the field $K(\Gamma_s)$, \cite{Kolchin}.
Therefore
\[\frac{\partial (\Psi_+)}{\Psi_+}=\phi_+\mbox{ and } \frac{\partial (\Psi_-)}{\Psi_-}=\phi_-\]
belong to $K(\Gamma_s)$ and $K(\Gamma_s)\langle \Psi_+\rangle$ and $K(\Gamma_s)\langle \Psi_-\rangle$ are Liouvillian extensions
of $K(\Gamma_s)$.

\begin{lem}\label{lem-sol}
Given $\Psi_+$ and $\Psi_-$ as in \eqref{eq-Psi}, it holds that:
\begin{enumerate}[a.]
\item $\{\Psi_+,\Psi_-\}$ is a fundamental set of solutions of $(L_s-\lambda)(\Psi)=0$.
\item $\Psi_+\Psi_-\in K(\Gamma_s)$.\label{formula-ssp}
\end{enumerate}
\end{lem}
\begin{proof}
Trivially $\Psi_+$ and $\Psi_-$ are nonzero solutions of $L_s-\lambda$. We will prove that their wronskian is nonzero. Observe that $\mu$ is a constant in $K(\Gamma_s)$ {, that is $\mu\in C(\Gamma_s)$ and furthermore it is nonzero, and the operator $L_s-\lambda$ is in normal form. }
Since $\partial (w(\Psi_+,\Psi_-))=0$, $w(\Psi_+,\Psi_-)$ belongs to $C(\Gamma_s)$. The following computation
\begin{equation}\label{eq-tensor1}
\frac{w(\Psi_+,\Psi_-)}{\Psi_+\Psi_-}=\frac{\partial(\Psi_+)}{\Psi_+}-\frac{\partial(\Psi_-)}{\Psi_-}=\phi_+-\phi_-=
\frac{2\mu}{\varphi} 
\end{equation}
implies that $w(\Psi_+,\Psi_-)\neq 0$ in $C(\Gamma_s)$. This formula implies that
\begin{equation}\label{eq-tensor2}
\Psi_+\Psi_-=\frac{\varphi w(\Psi_+,\Psi_-)}{2\mu}\in K(\Gamma_s),
\end{equation}
which completes the proof.
\end{proof}

\begin{rem}
{Notice that  Lemma \ref{lem-sol}, \ref{formula-ssp}  establishes that the product of two solutions belongs to the field of coefficients of the Schr\"odinger operator. Thus, the Picard-Vessiot structures $K(\Gamma_s)\langle \Psi_\pm\rangle$ will also benefit from this fact, analogously to the classical case, whenever a particular value $\lambda=\lambda_0$ is chosen. In this case, since Hermite \cite{Her}, Halphen \cite{Hal}, Drach \cite{Drach2, Drach3}, the study of the second symmetric power of the operator is fundamental to analyze the Lam\'e equation (see Wittaker-Watsom \cite{WW}, page 570 and the references therein).
}

{We would like to point out that the function of Lemma \ref{lem-sol}, \ref{formula-ssp}  was also used in the important work of Gelfand and Dickii to study the asymptotic behavior of the resolvent, see \cite{GD}.
}
\end{rem}

By Lemma \ref{lem-sol}, \ref{formula-ssp} the next equality of differential filed extensions of $K(\Gamma_s)$ holds
\begin{equation}\label{eq-decomp}
K(\Gamma_s)\langle \Psi_+,\Psi_{-}\rangle=K(\Gamma_s)\langle \Psi_+\rangle=K(\Gamma_s)\langle \Psi_-\rangle.
\end{equation}

%%%%%%%%%%%%%%%%%%%%%%%%%%%%%%%%%%%%%%%%%%
%\subsection{Transcendental Liouvillian extension}\label{sec-liouvillian}
%%%%%%%%%%%%%%%%%%%%%%%%%%%%%%%%%%%%%%%%%%

Let us denote by $\Psi_s=\Psi_+$, which is defined
 by the differential relation $\partial(\Psi_s)=\phi_s\Psi_s$. We will prove that the Liouvillian extension
\begin{equation}\label{eq-extLliouvi}
    K(\Gamma_s)\subset K(\Gamma_s)\langle \Psi_s\rangle
\end{equation}
is transcendental, showing that  $\Psi_s$ is a transcendental element over $K(\Gamma_s)$.
In fact, this is intrinsically related to the determination of the subfield of constants of the field $K(\Gamma_s)\langle \Psi_s\rangle$. For this purpose we will use results from the book of  M. Bronstein (see \cite{Bron} and the reference therein). {We quote here some results from \cite{Bron} for the convenience of the reader.}

\begin{defi}[\cite{Bron}, Definition 3.4.3]
Let $(F,\delta)$ be a differential field. We say that $\phi\in F$ is a {\rm logarithmic derivative of a $F$-radical} if there exist a nonzero $v$ in $F$ and an integer $n\neq 0$ such that $n\phi=\delta v/v$.
\end{defi}

\begin{thm}[\cite{Bron}, Theorem  5.1.2]\label{thm-Bronstein}
{If $t$ is an hyperexponential over $F$ and $Dt/t$ is not a logarithmic derivative of a $F$-radical, then $t$ is a monomial over $F$, the field of constants of $F(t)$ equals the field of constants of $F$; and $\cS=F$. Conversely, if $t$ is transcendental and hyperexponential
over $F$, and the field of constants of $F(t)$ equals the field of constants of $F$ then $Dt/t$ is not a logarithmic derivative of a $F$-radical.}
\end{thm}

\begin{thm}[\cite{Bron},   Corollary 3.3.1.]\label{corollary-Bronstein}
 { Let $(F,D)$ be a differential field and let $E$ be a separable algebraic
extension of $F$. Let also $\bf C$ be the constant field of $F$ with respect to the derivation $D$ and let $\overline{{\bf C}}^E$ be the algebraic closure of
$\bf C$ in $E$, i.e. the subfield of all the elements of $E$ that are algebraic over $\bf C$.
Then $D$ can be extended uniquely to $E$, call it $\tilde{D}$, and the field of constants of $E$ with respect to  $\tilde{D}$ is also $\overline{C}^E$. In addition, if $E$ is algebraically closed, then field of constants of $E$ with respect to $\tilde{D}$ is an algebraic closure of $\bf C$.}
\end{thm}

{The next diagram corresponds to Theorem \ref{corollary-Bronstein}.}

$$ \begin{tikzcd}
\overline{{\bf C}}^E \rar[hookrightarrow]  \dar[dash]  & (E,\tilde{D}) \dar[dash] \\%
{\bf C} \rar[hookrightarrow]& (F,D)
\end{tikzcd}
$$

Observe that if there exists a nonzero $v\in  K(\Gamma_s)$ and a nonzero integer $n$ such that
\[\frac{\tilde{\partial}(v)}{nv}=\phi_s=\frac{\tilde{\partial}( \Psi_s)}{ \Psi_s}\]
then for $c= \Psi_s^n/v$ we have $\tilde{\partial}(c)=0$ and also $ \Psi_s^n-cv=0$. This ensures that $ \Psi_s$ is algebraic over a differential field that is generated by $K(\Gamma_s)$ and a possibly new constant $c$. We will prove that this is not the case for $ \Psi_s$.

\begin{lem}\label{prop-kradical}
There does not exists a nonzero $v\in K(\Gamma_s)$ such that $\phi_s=\frac{\tilde{\partial} (v)}{nv}$ for a nonzero integer $n$. That is, $\phi_s$ is not a logarithmic derivative of a $K(\Gamma_s)$-radical.
\end{lem}
\begin{proof}
Let us assume that there exists $v\in K(\Gamma_s)$, $v\neq 0$, $\tilde{\partial}(v)\neq 0$ such that $\phi_s=\frac{\tilde{\partial} (v)}{nv}$ for a nonzero integer $n$. By Proposition \ref{lem-unit}, 2, we can write $v=\frac{p}{\Lambda_1}$ in $K(\Gamma_s)$ for some nonzero $p\in K[\lambda,\mu]$ and $\Lambda_1\in K[\lambda]$. Let $p_N$ be the normal form  of $p$ on $\Gamma_s$.

Let us consider the polynomial in $K[\lambda,\mu]$
\begin{equation}\label{eq-H}
H=\left[\partial (p_N)\Lambda_1-p_N\partial(\Lambda_1)\right]\varphi-np_N\Lambda_1 (\mu+\alpha).
\end{equation}
Recall that $\phi_s=\frac{\mu+\alpha}{\varphi}$ as in Theorem \ref{thm-L1}, hence $\alpha,\varphi\in K[\lambda]$ and
\[\frac{\tilde{\partial} (v)}{nv}-\phi_s=\frac{H}{np_N\Lambda_1\varphi}=0\,\,\,\mbox{ in }K(\Gamma_s).\]
Now we apply Proposition \ref{lem-unit}, 1, for $q=np_N\Lambda_1\varphi$. Then there exists {nonzero} $T\in K[\lambda,\mu]$ and a nonzero $\Lambda_2\in K[\lambda]$ such that
\[0=\frac{\Lambda_2H}{np_N\Lambda_1\varphi}=\frac{\Lambda_2H}{q}=\frac{TH}{1}\mbox{ in }K(\Gamma_s).\]
Therefore in $K(\lambda,\mu)$
\[\frac{\Lambda_2 H}{np_N\Lambda_1\varphi}=N f_s,\mbox{ for some }N\in K[\lambda,\mu].\]
Finally we obtain
\begin{equation}\label{eq-H2}
\Lambda_2\left[\left[\partial (p_N)\Lambda_1-p_N\partial(\Lambda_1)\right]\varphi-np_N\Lambda_1 (\mu+\alpha)\right]=np_N\Lambda_1\varphi N f_s.
\end{equation}

If $p_N\in K[\lambda]$ then the degree in $\mu$ of the LHS of \eqref{eq-H2} is $1$ and of RHS of \eqref{eq-H2} is at least $2$. Thus this is not possible.
We have proved that $v$ cannot be equal to $\gamma$ in $K(\Gamma_s)$ , with $\gamma\in K(\lambda)$. Hence it remains to check the case where $p_N$ is not in $K[\lambda]$.

Let us assume that $p_N\notin K[\lambda]$. By \eqref{eq-H2}, $p_N$ is a factor of $\Lambda_1\Lambda_2\partial(p_N)\varphi$. Then by Remark \ref{rem-factorLambda}, 1,  $\partial(p_N)=\Lambda_3 p_N$, with $\Lambda_3\in K[\lambda]$. Hence equality \eqref{eq-H2} becomes
\begin{equation}\label{eq-H3}
\Lambda_2\left[\left[\Lambda_3 \Lambda_1-\partial(\Lambda_1)\right]\varphi-n\Lambda_1 (\mu+\alpha)\right]=n\lambda_1\varphi N f_s.
\end{equation}
Observe that the degree in $\mu$ of the LHS of \eqref{eq-H3} is $1$ and of RHS of \eqref{eq-H3} is at least $2$. But this is a contradiction.
Therefore we conclude that such $v$ does not exist, which proves the result.
\end{proof}

It is an immediate consequence of the results in  \cite{Bron} that the previous lemma is equivalent to the next theorem.

\begin{thm}\label{thm-Bron}
Let $L_s$ be an algebro-geometric Schr\" odinger operator with spectral curve $\Gamma_s$. Let us consider the intrinsic right factor of $L_s-\lambda$  from Proposition \ref{thm-L1}, \ $\partial-\phi_s$, \  in $K(\Gamma_s)[\partial]$. A nonzero solution $\Psi_s$ of $(L_s-\lambda)\Psi =0$
defined by the differential relation $\partial(\Psi_s)=\phi_s\Psi_s$
is transcendental over $K(\Gamma_s)$ and the field of constants of $K(\Gamma_s)\langle \Psi_s\rangle$ equals the field of constants of $K(\Gamma_s)$.
\end{thm}
\begin{proof}  By Lemma \ref{prop-kradical},
applying Theorem \ref{thm-Bronstein} (\cite{Bron}, Theorem 5.1.2) to the hyperexponential $t=\Psi_s$ and the differential field $(K(\Gamma_s),\tilde{\partial})$, the result follows.
\end{proof}

We then proved the existence of the spectral Picard-Vessiot field over the curve $\Gamma_s$ of the equation $(L_s-\lambda)(\Psi)=0$.

\para

{
{\bf Proof of Theorem C.}
%\begin{proof}
{\it Theorem {\bf C} is a direct consequence of Lemma \ref{lem-sol}, Theorem \ref{thm-Bron} and Definition \ref{def-PV}.}
%\end{proof}
}

\para

We illustrate {Theorem {\bf C}} %\ref{thm-decomp} 
with the following commutative diagram, whose second row shows the fields of constants:

\[
\xymatrix{
K(\Gamma_s)\,\, \ar@{^(->}[r] & \,\, K(\Gamma_s)\langle \Psi_s\rangle \\
C(\Gamma_s)\,\,  \ar@{^-->}[u] \ar@{^==}[r] & \,\, C(\Gamma_s)\ar@{^(->}[u]
}.
\]

In the next two sections we show applications of this new structure,  the spectral PV field of the operator $L_s-\lambda$.

%%%%%%%%%%%%%%%%%%%%%%%%%%%%%%%%%%%%%%%%%%%%%%%%%%%%%%%%%%%%%%%%
\section{{Classical Picard-Vessiot fields}}\label{sec-specialization}
%%%%%%%%%%%%%%%%%%%%%%%%%%%%%%%%%%%%%%%%%%%%%%%%%%%%%%

Let $L_s=-\partial^2+u_s$ be an algebro-geometric Schr\" odinger operator with spectral curve $\Gamma_s$ and nonconstant potential $u_s$. For any fixed $P_0=(\lambda_0,\mu_0)$ in $\Gamma_s$, we will describe the Picard-Vessiot (PV) extension over $K=C\langle u_s\rangle$ of $L_s-\lambda_0$.  

We recall that, when a particular value of the spectral parameter is fixed,   in the definition of the  PV extension for $L_s-\lambda_0$, the field of constants used to be an algebraically closed field $C$ of characteristic $0$, see \cite{VPS} and also \cite{AMW}. Then, now we are looking for a differential field $\Sigma=K\langle y_1,y_2\rangle$,  the differential field extension of $K$ generated by
 $y_1,y_2$, where $\{y_1,y_2\}$ is a fundamental set of solutions of $(L_s-\lambda_0)(y)=0$, whose field of constants is also $C$. We will call $\Sigma $ {\it classical Picard-Vessiot Extension} in this case.

\para

For a nonsingular point $P_0$ of $\Gamma_s$, the dimension of the space of common solutions of $L_s-\lambda_0$ and $A_{2s+1}-\mu_0$ is known to be one, \cite{Wilson}, Theorem 5.8. In the next proposition we prove that this holds even for singular points.

\begin{prop}\label{prop-spefact}
For every $P_0=(\lambda_0,\mu_0)$ in $\Gamma_s$ the differential operators $L_s-\lambda_0$ and $A_{2s+1}-\mu_0$ have a greatest common right factor of order one $\partial-\phi_0$ in $K[\partial]$ with nonzero
\begin{equation}\label{eq-phi0}
\phi_0=\frac{\mu_0+\alpha(\lambda_0)}{\varphi(\lambda_0)}\in K.
\end{equation}
\end{prop}
\begin{proof}
By the Subresultant Theorem \ref{thm-subres}, $L_s-\lambda_0$ and $A_{2s+1}-\mu_0$ have a common factor  {  of order one in $K[\partial ]$}, namely $\varphi_1(P_0)+\varphi_2(P_0) \partial$ the specialization of \eqref{eq-G1} to $P_0=(\lambda_0,\mu_0)$, { hence $\varphi_2(\lambda_0)\neq 0$}
\end{proof}

For a fixed $P_0=(\lambda_0,\mu_0)$ (singular or not) of $\Gamma_s$, we have the factorization
\begin{equation}\label{eq-L0fac}
    L_s-\lambda_0=(-\partial-\phi_0)(\partial-\phi_0),\,\,\mbox{ in }K[\partial].
\end{equation}
Let us denote by $y_0$ a nonzero solution of $(\partial-\phi_0)(y)=0$. Thus $y_0$ is a common solution of
\begin{equation}\label{eq-systemP0}
\left\{\begin{array}{c}
     (L_s-\lambda_0)(y)=0,\\
     (A_{2s+1}-\mu_0)(y)=0.
\end{array}\right.
\end{equation}
For $u_s$ an analytic function in a suitable complex domain, $y_0$ is the stationary Baker-Akhiezer function
\[
y_0=\Psi(P_0,x,x_0)=\exp \left(\int_{x_0}^{x}{\phi_s(P_0,x')dx'} \right), \,\,\, P_0\in\Gamma_s .
\]
Traditionally the   Baker-Akhiezer function is only defined for nonsingular affine points of $\Gamma_s$ (\cite{GH}, (1.41)) .

\begin{prop}\label{thm-PVconst}
For any $P_0$ in $\Gamma_s$, let us consider
a nonzero solution $y_0$ {in a differential closure of the differential field $K$ } of  $(\partial-\phi_0) y=0$.
The field of constants of the differential field $K\langle y_0 \rangle$ is $C$.
\end{prop}
\begin{proof}
Being defined by $y'_0/y_0=\phi_0\in K$, we know that $K\langle y_0 \rangle$ is a Liouvillian extension of $K$. If $y_0$ is algebraic, the result follows {by Theorem \ref{corollary-Bronstein} (\cite{Bron}, Corollary 3.3.1)}. If $y_0$ is transcendental the result follows by \cite{Bron}, theorems 5.1.1 and 5.1.2.
\end{proof}

To describe the PV extension of $L_s-\lambda_0$, we must distinguish two different types of point $P_0=(\lambda_0,\mu_0)$ in the curve, the ones with $\mu_0\neq 0$ and those with $\mu_0= 0$, that is
the finite set
\begin{equation}\label{eq-Zs}
Z_s=\Gamma_s\cap (C\times\{0\})=\{(\lambda,0)\mid R_{2s+1}(\lambda)=0\}.    
\end{equation}

Observe that $Z_s$ contains all the affine singular points and ramification points of $\Gamma_s$.

\begin{thm}\label{thm-classPV1}
Let us fix $P_0=(\lambda_0,\mu_0)$ in $\Gamma_s\backslash Z_s$. The PV extension of the equation $(L_s-\lambda_0)y=0$, is the Liouvillian extension $K\langle y_0 \rangle$ of $K$ by a nonzero solution $y_0$ of $(\partial-\phi_0) y=0$ as in \eqref{eq-L0fac}.
\end{thm}
\begin{proof}
Applying Proposition  \ref{prop-spefact} to the point $P_0'=(\lambda_0,-\mu_0)$ gives a new factorization
\[L_s-\lambda_0=(-\partial-\phi_{0-})(\partial-\phi_{0-}),\,\, \mbox{ in }K[\partial].\]
Then we have
\begin{equation}\label{eq-phi+0}
\phi_{0+}=\phi_0=\frac{\mu_0+\alpha(\lambda_0)}{\varphi(\lambda_0)}\mbox{ and }\phi_{0-}=\frac{-\mu_0+\alpha(\lambda_0)}{\varphi(\lambda_0)}.
\end{equation}
Hence we consider nonzero solutions $y_+$ and $y_{-}$  of the differential equations
$\partial(y)=\phi_{0+}y$ and $\partial(y)=\phi_{0-}y$ respectively, in a differential closure of $K$.
The equality
\[\frac{w(y_{+},y_{-})}{y_{+}y_{-}}=\phi_{0+}-\phi_{0-}=
\frac{2}{\varphi(\lambda_0)}\mu_0\neq 0\]
implies that $W_0=w(y_{+},y_{-})\neq 0$ in $C$. Therefore $\{y_{+},y_{-}\}$ is a fundamental set of solutions of $(L_s-\lambda_0)(y)=0$. Moreover
\[y_{+}y_{-}=\frac{\varphi(\lambda_0)W_0}{2\mu_0}\in K,\]
hence $K\langle y_{+},y_{-}\rangle=K\langle y_{+}\rangle$.
In addition, by Proposition \ref{thm-PVconst},  $K\langle y_{+}\rangle$ and $K$ have the same field of constants $C$, which proves that $K\langle y_{+}\rangle$ is the PV field of $L_s-\lambda_0$.
\end{proof}

Observe that $(\partial-\phi_0)(y_0)=0$ implies $(-\partial-\phi_0)(y_0^{-1})=0$. Thus a solution of the Risch differential equation
\begin{equation}\label{eq-Risch}
    \partial(y)-\phi_0 y=y_0^{-1}
\end{equation}
over the differential field $K\langle y_0\rangle$, see \cite{Bron}, Section 6, would be a solution of $(L_s-\lambda_0)(y)=0$ because of the factorization \eqref{eq-L0fac}.

\begin{thm}\label{thm-classPV2}
Let us fix $(\lambda_0,0)$ in $Z_s$. Given nonzero solutions $y_0$ of $(\partial-\phi_0) y=0$ as in \eqref{eq-L0fac} and $y_1$ of \eqref{eq-Risch}, we have a chain of Liouvillian extensions
\begin{equation}\label{thm-especializado_0}
 K\subset K\langle y_0\rangle \subset K\langle y_0, y_1\rangle   
\end{equation}
with field of constants $C$, and then $K\langle y_0, y_1\rangle$ is the PV extension of $(L_s-\lambda_0)y=0$.
\end{thm}
\begin{proof}
By Proposition \ref{thm-PVconst} the field of constants of $K\langle y_0\rangle$ is $C$.
Let us assume that $y_1\notin K\langle y_0\rangle$.
We have a fundamental system of solutions $\{y_0,y_1\}$ of $(L_s-\lambda_0)(y)=0$ since
\[\frac{w(y_0,y_1)}{y_0y_1}=\frac{\partial (y_0)}{y_0}-\frac{\partial (y_1)}{y_1}=
\frac{-1}{y_0y_1}.\]
Remains  to prove that the field of constants of $K\langle y_0, y_1\rangle$ is $C$.

Let $\hat{y}_1$ be a nonzero solution of $\partial-y_0^{-1}$. Without loss of generality, we can take $y_1=y_0\hat{y}_1$ as the solution of \eqref{eq-Risch}. Observe that $K\langle y_0, y_1\rangle=K\langle y_0,\hat{y}_1\rangle$ is a Liouvillian extension of $K\langle y_0\rangle$ and $\hat{y}_1$ is a primitive element over $K\langle y_0\rangle$. If $\partial (\hat{y}_1 )$ is a not the derivative of an element of $K\langle y_0\rangle$, by \cite{Bron}, Theorem 5.1.1 the field of constants of $K\langle y_0,\hat{y}_1\rangle$ is $C$. Otherwise $\partial (\hat{y}_1 )=\partial( N_0 )$, $N_0\in K\langle y_0\rangle$. Derivating $y_1/y_0=\hat{y}_1$ we have $\partial (y_1 )/y_1=\partial (y_0 )/y_0 +\partial (  \hat{y}_1 )$, and hence $\partial (y_1 )/y_1\in K\langle y_0\rangle$. 

Moreover if the hyperexponential $y_1$ is a logarithmic derivative of a $ K\langle y_0\rangle$-radical then, by \eqref{eq-Risch}, $y_1\in K\langle y_0\rangle$, which is a contradiction. Thus {Theorem \ref{thm-Bron} (\cite{Bron}, Theorem 5.1.2)} implies that the field of constants of $K\langle y_0, y_1\rangle$ is the field of constants of $ K\langle y_0\rangle$, that is $C$.
\end{proof}

\para

\begin{rem}{
We would like to point out that the classical solutions of the Lam\'e equation co\-rres\-pond exactly to the case contemplated here when $ \mu = 0 $, see \cite{WW} and the references therein. Then their Picard-Vessiot extensions are given by { \eqref{thm-especializado_0}}. Their Galois groups are studied in \cite{Morales}}
\end{rem}

\para

For all but a finite number of points in $\Gamma_s$, we summarize the situation in  the next commutative diagram:

\centerline{
\xymatrixcolsep{5pc}{
\xymatrix{
C(\Gamma_{s}) \,\, \ar@{^-->}[r] &K(\Gamma_s)\,\, \ar@{^-->}[r]^-{\mbox{Liouvillian}} & \,\, K(\Gamma_s)\langle \Psi_s\rangle  \\
C \,\, \ar@{^-->}[u] \ar@{^-->}[r] &K\,\,  \ar@{^-->}[u] \ar@{^-->}[r]^-{\mbox{at } P_{0}\in \Gamma_{s}} & \,\, K\langle y_0\rangle  \ar@{^-->}[u]}
}}
\noindent for $K=C\langle u_{s}\rangle$. {In fact, in previous notations, this diagram holds for each $P_0 \not\in Z_s$ in \eqref{eq-Zs}.} In other words, for the non-branch points of $ \Gamma_s $, we have a good specialization process to obtain the classical Picard-Vessiot field from the spectral Picard-Vessiot  field.

%%%%%%%%%%%%%%%%%%%%%%%%%%%%%%%%%%%%%%%%%%%%%%%%%%%%%%%%%%%%%%%%%%%%%%%%%%%%%%%%%%%%%%%%%%%
\section{One-parameter spectral PV fields for rational curves}\label{sec-Parametric}
%%%%%%%%%%%%%%%%%%%%%%%%%%%%%%%%%%%%%%%%%%%%%%%%%%%%%%%%%%%%%%%%%%%%%%%%%%%%%%%

 In this section we assume that the spectral curve $\Gamma_s$ of the pair $\{L_s,A_{2s+1}\}$ is an  affine rational curve, i.e. its closure in $\mathbb{P}^2$ is birrationally equivalent to $\mathbb{P}^1$ (\cite{Harr}, pp. 78). Hence an open subset {$\tilde{U}$ of $\Gamma_s$ is isomorphic to a Zariski open subset $U$ of the affine line $\mathbb{A}^1$}. Then  we have
\begin{equation}
    \mathbb{C}(\Gamma_s ) \overset{\rho_1}{\simeq} \mathbb{C}(\tau ) \quad \mbox{ with }\rho_1(\lambda)=\chi_1(\tau) \mbox{ and }\rho_1(\mu)=\chi_2(\tau)
\end{equation}
for a complex parameter $\tau$ such that
\begin{equation}
\aleph: U\ni \tau \rightarrow    \left(\chi_1(\tau),\chi_2(\tau)\right)\in \Gamma_s
\end{equation}
is a regular isomorphism, and then $\mathbb{C}(\Gamma_s )  \simeq   \mathbb{C}(U )$.

\para

Let us consider next the differential structure of  $K(\tau)=\mathbb{C}\langle u_s \rangle(\tau)$.
Recall that $\tau$ is an alge\-braic indeterminate over $ \mathbb{C}\langle u_s \rangle$, which allows to extend the derivation $\partial=d/dx$ of $\mathbb{C}\langle u_s \rangle$ to   $\mathbb{C}\langle u_s \rangle (\tau)$ since $\partial(\tau)=0$. Then we extend the isomorphism $\rho_1$ to an isomorphism  of differential fields
$
    \mathbb{C}(\Gamma_s )\langle u_s \rangle  \simeq \mathbb{C}(\tau )\langle u_s \rangle.
$
Since $\mathbb{C}(\tau )\langle u_s \rangle=\mathbb{C}\langle u_s \rangle  (\tau )$, the composition of $\rho_1$ with the natural isomorphism
$
    \mathbb{C}\langle u_s \rangle (\Gamma_s )  \simeq  \mathbb{C}(\Gamma_s )\langle u_s \rangle
$
gives an isomorphism, let us call it also $\rho_1$,
\begin{equation}
    K(\Gamma_s)=\mathbb{C}\langle u_s \rangle (\Gamma_s )  \simeq   \mathbb{C}\langle u_s \rangle  (\tau )=K(\tau).
\end{equation}
The next commutative diagram of differential fields illustrates the differential algebraic situation:
\begin{equation}\label{diag_rho}
\xymatrix{
K(\Gamma_s)\,\, \ar[r]^{\rho_1} & \,\, K(\tau) \\
\bbC(\Gamma_s)\,\,  \ar@{^(->}[u] \ar[r]^(.5){\rho_1} & \,\, \bbC(\tau) \ar@{^(->}[u]
}
\end{equation}
Moreover, $\rho_1$ extends naturally to an isomorphism $\varrho$ between the rings of differential operators $K(\Gamma_s)[\partial]$ and $K(\tau)[\partial]$.

\para

Using Proposition \ref{thm-L1}, we obtain the right common factor ${\varrho}(\partial-\phi_s)$ of
$L_s-\chi_1(\tau)={\varrho}(L_s-\lambda)$ and $A_{2s+1}-\chi_2(\tau)={\varrho}(A_{2s+1}-\mu)$ in $K(\tau)[\partial]$. Let us define $\tilde{\phi}_s=\rho_1(\phi_s)$. Observe that $\tilde{\phi}_s$ is a nonzero element of $K(\tau)$ since, by Proposition \ref{thm-L1}, $\phi_s$ is nonzero in $K(\Gamma_s)$.
Furthermore, since the isomorphism respects the ring structure, from \eqref{eq-factKGamma} we obtain
\begin{equation}\label{eq-factKtau}
    L_s-\chi_1(\tau)=(-\partial-\tilde{\phi}_s)(\partial-\tilde{\phi}_s),\mbox{ in } K(\tau)[\partial].
\end{equation}
Furthermore, we have factorizations
\[L_s-\chi_1(\tau)=(-\partial-\tilde{\phi}_+)(\partial-\tilde{\phi}_+)=(-\partial-\tilde{\phi}_-)(\partial-\tilde{\phi}_-)\]
where $\tilde{\phi}_+=\tilde{\phi}_s$ and $\tilde{\phi}_-=\rho(\phi_-)$ are distinct solutions of the same  Riccati equation $\partial(\phi)+\phi^2=u_s-\chi_1(\tau)$, since $\rho_1$ respects the differential field structure. In the differential closure $\widehat{K(\tau)}$ of $K(\tau)$ \cite{Kolchin}, we consider nonzero solutions $\Upsilon_+$ and $\Upsilon_-$ respectively of the ordinary differential equations
\begin{equation}\label{eq-Upsilon}
\partial(\Upsilon)=\tilde{\phi}_+\Upsilon\,\,\,\mbox{ and }\,\,\,\partial(\Upsilon)=\tilde{\phi}_-\Upsilon.
\end{equation}

\begin{lem}\label{lem-solpar}
Let $\Upsilon_+$ and $\Upsilon_-$ as in \eqref{eq-Upsilon}, it holds that:
\begin{enumerate}
\item $\{\Upsilon_+,\Upsilon_-\}$ is a fundamental set of solutions of $(L_s-\chi_1(\tau))(\Upsilon)=0$.
\item $\Upsilon_+\Upsilon_-\in K(\tau)$.
\end{enumerate}
\end{lem}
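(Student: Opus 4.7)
The plan is to prove this lemma as a parametric mirror of Lemma \ref{lem-sol}, exploiting that the differential isomorphism $\varrho: K(\Gamma_s)[\partial] \to \cF[\partial]$ from Theorem \ref{thm-rho} carries every structural fact about $L_s-\lambda$ and its factors $\phi_{\pm}$ across to the parametric side. First I would deduce part (1) by applying $\varrho$ to the two factorizations of $L_s - \lambda$ written in Section \ref{subsec-KdVfactors}, which yield in $\cF[\partial]$ the identities $L_s - \chi_1(\tau) = (-\partial - \tilde{\phi}_+)(\partial - \tilde{\phi}_+) = (-\partial - \tilde{\phi}_-)(\partial - \tilde{\phi}_-)$. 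Thus any nonzero $\Upsilon_{\pm}$ satisfying \eqref{eq-Upsilon} is automatically annihilated by $L_s - \chi_1(\tau)$. For linear independence I would repeat the wronskian computation \eqref{eq-tensor1} one-to-one:
\[
\frac{w(\Upsilon_+, \Upsilon_-)}{\Upsilon_+ \Upsilon_-} = \tilde{\phi}_+ - \tilde{\phi}_- = \rho(\phi_+ - \phi_-) = \rho\!\left(\frac{2\mu}{\varphi}\right) = \frac{2\,\chi_2(\tau)}{\rho(\varphi)}.
\]
Since $\varphi \in K[\lambda]$ is nonzero and $\rho$ is an isomorphism, $\rho(\varphi) \neq 0$ in $\cF$; and $\chi_2(\tau)$ is not the zero meromorphic function because $\mu$ does not vanish identically on $\Gamma_s$ (the defining polynomial $f_s$ has degree $2$ in $\mu$). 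Hence $w(\Upsilon_+, \Upsilon_-) \neq 0$, which gives (1).

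For part (2), I would observe that $L_s - \chi_1(\tau) = -\partial^2 + (u_s - \chi_1(\tau))$ has no first-order term, so Abel's identity yields $\partial(w(\Upsilon_+, \Upsilon_-)) = 0$. Therefore $w(\Upsilon_+, \Upsilon_-)$ is a constant in whatever differential extension of $\cF$ contains $\Upsilon_+$ and $\Upsilon_-$. Granting that this constant actually lies in $\cF$, I can rearrange the identity from the previous paragraph to obtain
\[
\Upsilon_+ \Upsilon_- = \frac{\rho(\varphi)\, w(\Upsilon_+, \Upsilon_-)}{2\,\chi_2(\tau)} \in \cF,
\]
mirroring \eqref{eq-tensor2} exactly.

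The one nontrivial point in this plan, and where I expect the main obstacle to sit, is justifying that the wronskian constant already belongs to $\cF$ and not merely to some a priori larger constant field of $\cF\langle \Upsilon_+, \Upsilon_- \rangle$. The cleanest route is to invoke the parametric analog of Theorem \ref{thm-decomp}: since $\varrho$ is a differential isomorphism respecting constants by Theorem \ref{thm-rho}, Proposition \ref{prop-kradical} transports to the assertion that $\tilde{\phi}_s$ is not a logarithmic derivative of an $\cF$-radical, and Theorem \ref{thm-Bron} then guarantees that the constants of $\cF\langle \Upsilon_+\rangle = \cF\langle \Upsilon_+, \Upsilon_-\rangle$ coincide with those of $\cF$, namely $\bbC(\chi_1(\tau), \chi_2(\tau))$. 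Alternatively, one can simply apply $\varrho$ to the identity $\Psi_+\Psi_- = \varphi\, w(\Psi_+,\Psi_-)/(2\mu) \in K(\Gamma_s)$ from Lemma \ref{lem-sol}(2) after extending $\varrho$ to the Picard--Vessiot extensions, which is possible because both sides are built from the same factorization data. Either way, both parts of the lemma follow by direct translation of the proof of Lemma \ref{lem-sol} through the isomorphism $\varrho$.
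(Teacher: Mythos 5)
Your proof is correct and is essentially the paper's own argument: the paper simply declares the proof analogous to Lemma \ref{lem-sol}, the key point being exactly your computation $w(\Upsilon_+,\Upsilon_-)/(\Upsilon_+\Upsilon_-)=\tilde{\phi}_+-\tilde{\phi}_-=\rho(\phi_+-\phi_-)=\rho(2\mu/\varphi)\neq 0$, which is nonzero because $\rho$ is an isomorphism. The subtlety you flag about the wronskian constant lying in $\cF$ is not addressed in the paper either (it is asserted there just as in Lemma \ref{lem-sol}), so your extra justification is a harmless refinement---though the route through the transported identity $\Psi_+\Psi_-=\varphi\, w(\Psi_+,\Psi_-)/(2\mu)$ is preferable to invoking the parametric version of Theorem \ref{thm-Bron}, which in the paper appears only after this lemma and whose use of $\cF\langle\Upsilon_+\rangle=\cF\langle\Upsilon_+,\Upsilon_-\rangle$ presupposes part~2.
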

\begin{proof}
The proof is analogous to the proof of Lemma \ref{lem-sol}, noting that
\[\frac{w(\Upsilon_+,\Upsilon_-)}{\Upsilon_+\Upsilon_-}=\tilde{\phi}_+-\tilde{\phi}_-=\rho_1 (\phi_+-\phi_-)=\rho_1 \left(\frac{ 2}{\varphi}\mu\right)\neq 0\]
since $\frac{ 2}{\varphi}\mu\neq 0$ and $\rho_1$ is an isomorphism.
\end{proof}

By Lemma \ref{lem-solpar}, 2 we have $K(\tau)\langle \Upsilon_+,\Upsilon_-\rangle=K(\tau)\langle \Upsilon_+\rangle=K(\tau)\langle \Upsilon_-\rangle$.
We denote $\Upsilon_s=\Upsilon_+$.
Now we can apply the differential algebraic results in \cite{Bron}, to the hyperexponential $\Upsilon_s$ and the differential field $(K(\tau),\partial)$, regarding the integration problem of $\tilde{\phi}_s=\partial \Upsilon_s/\Upsilon_s$ in $K(\tau)$.

\begin{lem}\label{lema-Krad-param}
The element $\tilde{\phi}_s$ of $K(\tau)$ is not a logarithmic derivative of a $K(\tau)$-radical.
\end{lem}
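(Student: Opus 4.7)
The plan is to reduce this statement to the analogous statement already established over $K(\Gamma_s)$, namely Proposition \ref{prop-kradical}, by transport of structure along the differential isomorphism $\rho$.

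First I would recall that by Theorem \ref{thm-rho}, the map $\rho : (K(\Gamma_s),\tilde{\partial}) \to (\cF,\partial)$ is an isomorphism of differential fields, and by construction $\tilde{\phi}_s = \rho(\phi_s)$. In particular $\rho^{-1}$ is also a differential field isomorphism, so it commutes with the derivations: $\rho^{-1} \circ \partial = \tilde{\partial} \circ \rho^{-1}$, and it sends nonzero elements to nonzero elements.

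Next I would argue by contradiction. Suppose $\tilde{\phi}_s$ were a logarithmic derivative of an $\cF$-radical, i.e.\ there exist a nonzero $v \in \cF$ and a nonzero integer $n$ such that
\[
n\tilde{\phi}_s = \frac{\partial(v)}{v}.
\]
Set $w := \rho^{-1}(v) \in K(\Gamma_s)$; then $w$ is nonzero. Applying $\rho^{-1}$ to the displayed equation and using that $\rho^{-1}$ is a differential ring homomorphism, one obtains
\[
n\phi_s \;=\; \rho^{-1}(n\tilde{\phi}_s) \;=\; \rho^{-1}\!\left(\frac{\partial(v)}{v}\right) \;=\; \frac{\tilde{\partial}(w)}{w}.
\]
Hence $\phi_s$ would be a logarithmic derivative of a $K(\Gamma_s)$-radical, contradicting Proposition \ref{prop-kradical}. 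Therefore no such $v$ exists, which is the desired conclusion.

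This proof is essentially immediate; the only point that needs care is to explicitly invoke the differential (as opposed to merely algebraic) nature of the isomorphism $\rho$ from Theorem \ref{thm-rho}, so that the formula $\partial(v)/v$ is transported to $\tilde{\partial}(w)/w$ under $\rho^{-1}$. There is no real obstacle, since all the hard work was done in establishing Proposition \ref{prop-kradical} (using the normal-form and degree-in-$\mu$ arguments on $\Gamma_s$); this lemma is simply its transfer to the parameterized side.
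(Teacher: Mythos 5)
Your proof is correct and follows essentially the same route as the paper: transporting the assumed relation through the differential isomorphism $\rho$ of Theorem \ref{thm-rho} (the paper applies $\rho$ to $v\in K(\Gamma_s)$, you apply $\rho^{-1}$ to $v\in\cF$, which is the same argument) and concluding by contradiction with the result over $K(\Gamma_s)$. Note that the paper's proof ends by citing Lemma \ref{lem-phi}, which appears to be a slip for Proposition \ref{prop-kradical}; your citation is the accurate one.
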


\begin{proof}
Let us assume that there exists $w\in K(\tau)$, $w\neq 0$, $\partial(w)\neq 0$ such that $\tilde{\phi}_s=\frac{\partial (w)}{nw}$ for a nonzero integer $n$. Since $\rho_1$ is an isomorphism, $w=\rho_1(v)$, with $v\in K(\Gamma_s)$, $v\neq 0$, $\tilde{\partial}(v)\neq 0$. Then
$$
\tilde{\phi}_s=\rho_1(\phi_s)=\rho_1\left(\frac{\tilde{\partial} (v)}{nv}\right)=\frac{\partial (w)}{nw}
$$
implies $\phi_s=\frac{\tilde{\partial} (v)}{nv}$ contradicting Lemma  \ref{prop-kradical}.
\end{proof}

\begin{thm}\label{thm-Bron2}
Let $L_s$ be an algebro-geometric Schr\" odinger operator with rational spectral curve $\Gamma_s$ parametrized by $(\chi_1 (\tau),\chi_2 (\tau))\in \bbC(\tau)^2$. Let $\partial-\tilde{\phi}_s$ be the intrinsic right factor of $L_s-\chi_1(\tau)$ as in \eqref{eq-factKtau}.
The Liouvillian extension $K(\tau)\langle \Upsilon_s\rangle$ of $K(\tau)$, by a nonzero solution $\Upsilon_{s} \in \widehat{K(\tau)}$ of $(\partial-\tilde{\phi}_s )\Upsilon =0$, is a transcendental extension with field of constants $\bbC(\tau)$.
\end{thm}

\begin{proof}
By Lemma \ref{lema-Krad-param} and  \cite{Bron}, Theorem 5.1.2 the result follows.
\end{proof}

{In Theorem {\bf C} of the introduction}, %\ref{thm-decomp}
we gave the field structure of the spectral Picard-Vessiot field for any spectral curve. {In Theorem {\bf D} of the introduction,}  we give its parametric presentation whenever the spectral curve is a rational curve. {We are ready to prove Theorem {\bf D}.}

\para

%\begin{proof}
{
{\bf Proof of Theorem D.}}
{\it
The isomorphism $\rho_1$ in \eqref{diag_rho} extends naturally to an isomorphism between differential fields

$$
\hat{\rho}_1 : K(\Gamma_s)\langle \Psi_s\rangle\longrightarrow K(\tau)\langle \Upsilon_s \rangle
$$
by sending a nonzero solution $\Psi_s$ of $(\partial-\phi_s )\Psi=0$, as in Theorem \ref{thm-Bron}, to $\Upsilon_s $. Hence diagram \eqref{diag_rho}   extends to the following commutative diagram of differential fields, whose second row shows the fields of constants:
$$
\xymatrix{
K(\Gamma_s)\langle \Psi_{s}\rangle\,\, \ar[r]^{\hat{\rho}_{1}} & \,\, K(\tau)\langle\Upsilon_{s}\rangle \\
\bbC(\Gamma_{s})\,\,  \ar@{^(->}[u] \ar[r]^{\rho_{1}} & \,\, \bbC (\tau) \ar@{^(->}[u]
}
$$
Hence we have proved the  required statement.
}

%\end{proof}

\para

The global rational parametrization of the spectral curve allows us to be more {specific} about the integral representation of $\Upsilon_s$, which was defined in \eqref{eq-Upsilon} as a solution of $(\partial-\bar{\phi}_s)\Upsilon=0$.  {Moreover, we have the following sequence of differential fields, 
\begin{equation}\label{eq-tower}
   \bbC(\Gamma_s ) = \bbC(\tau ) \subset {\bbC \langle u_s \rangle (\tau )=K(\tau) \subset K(\tau) \langle\Upsilon_s \rangle} .
\end{equation}
with $\Upsilon_s$ a transcendental element over $\bbC \langle u_s ,\tau \rangle$, by Theorem \ref{thm-Bron2}.} We can finally show the advantages of constructing solutions of the spectral problem \eqref{eq-problem} using a global rational parametrization of the spectral curve in $\bbC(\tau)^2$, instead of a local parametrization by Puiseux series in  $\bbC\langle\langle \tau \rangle\rangle^2$. We do so by means of a family of Rosen-Morse potentials in Example \ref{example} to illustrate Remark \ref{rem-symbolicint}, and establishing the appropriate algebraic setting to solve problem \eqref{eq-problem} analytically obtaining Theorem \ref{thm-analytic}. 

\begin{rem}\label{rem-symbolicint}
Recall that $\partial-\tilde{\phi}_s$ is an operator in $K(\tau)[\partial]$ where
\[K(\tau)=\bbC\langle u_s\rangle (\tau)=\bbC(\tau)\langle u_s\rangle.\]
If $u_s$ is a monomial over the differential field $\bbC(\tau)$ one can address the integration of $(\partial-\bar{\phi}_s)\Upsilon=0$ in the differential algebraic setting of \cite{Bron}, Chapter 5.
Then the differential integration theorems and algorithms in \cite{Bron} can be used to compute $\Upsilon_s$ in an elementary extension of $\bbC(\tau)$ if it exists.
\end{rem}

\begin{ex}\label{example}
Let us consider the family of Rosen-Morse potentials \begin{equation}\label{eq-RM}
  u_s=\frac{-s(s+1)}{\cosh^2(x)},\,\,\, s\geq 1,
\end{equation}
which belong to the differential field $K=\bbC\langle \cosh(x)\rangle=\bbC(e^x)$, with derivation $\partial=d/dx$ and field of constants $\bbC$. It is well known that the Schr\" odinger operators $-\partial^2+u_s$ are algebro-geometric, \cite{Veselov}, \cite{MRZ}.
In \cite{MRZ}, we gave algorithms to compute $A_{2s+1}$. By means of the differential resultant $\dres(L_s-\lambda, A_{2s+1}-\mu)$ the defining polynomial $f_s$ of $\Gamma_s$ can be computed,
{$$f_s (\lambda ,\mu ) =\mu^2+\lambda\prod_{\kappa=1}^s(\lambda+\kappa^2)^2,$$}
see for instance \cite{GH}, Example 1.31.
By means of the first differential subresultant of $L_s-\lambda$ and  $A_{2s+1}-\mu$ their right common factor $\partial-\phi_s$ is obtained. For $s=1$ 
\[
\phi_1=\frac{\mu+\frac{1}{2}\partial(\varphi)}{\varphi}=\frac{(z^2 +1)^3\mu+z^4-z^2}{(z^2+1) ((z^2+1)^2\lambda+z^4+z^2+1)}, \mbox{ where } \varphi=\lambda+1-\frac{1}{\eta^2},
\]
with $\eta=\cosh(x)$ and $z=e^x$.
All the curves $\Gamma_s$ are rational, in particular they admit a polynomial global parametrization
\begin{equation}\label{eq-param-RM}
\aleph_s(\tau)=(\chi_1(\tau),\chi_2(\tau))=\left(-\tau^2 ,-\tau\prod_{\kappa=1}^s(\tau^2-\kappa^2)
\right),
\end{equation}
{with $\tau$ transcendental over $\bbC (e^x )$. }Replacing $\lambda=\chi_1(\tau)$ and $\mu=\chi_2(\tau)$ in $\phi_s$ we obtain $\tilde{\phi}_s(x,\tau)$, which is a rational function in $\bbC(\tau)(z)$. Since $z=e^x$ is transcendental over $\bbC(\tau)$, we are in the situation of Remark \ref{rem-symbolicint}. Using the symbolic integration package of Maple 18 to obtain $\Upsilon_s$ as ${\tt int}(\tilde{\phi}_s,x)$. For instance the primitive of $\tilde{\phi}_1$ equals
\[
\Upsilon_1=\displaystyle{\frac{\left(\tau-1 \right) z^2+\tau+1}{z^2+1}}{e}^{x\tau} ,
\]
replacing $\lambda=-\tau^2$ and $\mu=-\tau (\tau^2 - 1)$ in $\phi_1$; observe that all functions are analytic outside the analytic set $E=\left\{ \   z^4+z^2+1 -(z^2+1)^2\tau^2  =0 \ \right\} \subset \bbC^2$. {By \eqref{eq-tower} the corresponding sequence of differential fields is:
 \begin{equation*}
   \bbC(\Gamma_1 ) = \bbC(\tau ) \subset \bbC\langle e^x \rangle (\tau)\subset \bbC (e^x ,\tau) \langle\Upsilon_1 \rangle ,
\end{equation*}
since $\bbC\left< \frac{-2}{\cosh^2(x)}  \right> (\tau) =\bbC(e^x ,\tau )$. Moreover, Theorem \ref{thm-Bron2} guaranties that the function $\Upsilon_1$ is trans\-cendental over $\bbC (e^x ,\tau) $, and Theorem {\bf D} that  $\bbC (e^x ,\tau) \langle\Upsilon_1 \rangle$ is the minimal field extension where the solutions of the spectral problem 
\begin{equation}
  \left(  -\partial^2 +\frac{-2}{\cosh^2(x)}\right) \Psi = -\tau^2 \Psi
\end{equation}
 can be expressed as an element of a differential field with field of constants  $\bbC (\tau )$. In fact, any solution of the above problem is of the form
 \[
 \Psi (x, \tau ) = c_1 (\tau ) \Upsilon_1  (x, \tau )+c_2 (\tau ) \Upsilon_{-} (x,\tau) 
 \]
 with $c_i (\tau ) \in \bbC (\tau )$ \  and \ $\Upsilon_{-} (x,\tau) = \frac{\varphi}{ 2\mu \Upsilon_1 }=\frac{ (-(z^2+1)^2 \tau^2 +z^4+z^2+1)(z^2 +1)^2}{-2\tau (\tau^2 -1 )(\left(\tau-1 \right) z^2+\tau+1)}e^{-x\tau }$.}
\end{ex}

\para

Next we will establish the appropriate algebraic setting to solve the spectral problem \eqref{eq-problem} {analytically}.

For an analytic potential $u_s(x)$ in a complex domain $D$ in $\mathbb{C}$, we will prove  that $(\partial-\bar{\phi}_s)\Upsilon=0$ has an analytic solution in an open neighborhood of each point in some open subset of $\bbC^2$.
Since $\Gamma_s$ is  defined by $\mu^2+R_{2s+1}(\lambda)$, its singular locus $\Sing (\Gamma_s)$ is contained in the finite set $Z_s = \{ (\lambda,0)\in \mathbb{C}^2 \ : R_{2s+1}(\lambda)=0 \}$, and we can assume that $U\cap Z_s =\emptyset$. As a consequence, at any fixed $\tau_0 \in U$ we have
\begin{equation}
 \chi_1 (\tau) =\dfrac{g_1 (\tau ) }{h_1 (\tau )}
\quad , \quad
\chi_2 (\tau) =\dfrac{g_2 (\tau ) }{h_2 (\tau )}
\end{equation}
for some polynomials $g_i , h_i $ in $\mathbb{C}[\tau ]$ and a neighborhood $U_{\tau_0 }$ of $\tau_0$ in $U$ where $h_i (\tau )\not=0$ .  Then, in $U$, we break down the spectral problem:
\begin{equation}\label{problem-Parametric}
\left(-\partial^2 +u_s(x)  -\chi_1(\tau)\right)\Upsilon =0 \ , \ \left(A_{2s+1} -\chi_2(\tau) \right)\Upsilon=0 \ 
\end{equation}
into two steps. First we compute the right common factor of the operator $-\partial^2 +u_s(x)  -\chi_1(\tau)$ and $A_{2s+1}-\chi_s(\tau)$ with $\tau$ in $U$ to obtain a one parameter family of common right first order factors, say $\partial- \tilde{\phi}_s (x , \tau )$. Secondly, we address the resolubility of the first-order equation that generates the one dimensional space of common solutions of  \eqref{problem-Parametric}:
\begin{equation}\label{eq-factorRacional}
\left(\partial- \tilde{\phi}_s (x , \tau )\right)\Upsilon =0   ,\,\, \mbox{ for } x\in D, \,\, \tau\in U .
\end{equation}
Finally we can give a fundamental matrix for the linear differential equation  $(-\partial^2 +u_s(x)  -\chi_1(\tau))\Upsilon =0$ that varies continuously in $\tau \in U$, see Theorem \ref{thm-analytic}.

\begin{thm}\label{thm-analytic}
Let $-\partial^2+u_s(x)$ be an algebro-geometric Schr\" odinger operator, with $u_s(x)$ an analytic potential in a complex open set $D$. Let us assume that the spectral curve $\Gamma_s$ of $L_s-\lambda$ is a rational curve. Then, for  each $c$ in a domain $\Omega_c \subset D\times U \subset \bbC^2$, the equation
 \begin{equation} \label{eq-BA-analytic}
     (\partial- \tilde{\phi}_s (x , \tau ))\Upsilon=0
 \end{equation}
 has an analytic solution $\Upsilon (x,\tau)$ in a subdomain of $\Omega_c$.
\end{thm}
\begin{proof}
Having a rational parametrization of the spectral curve $\Gamma_s$ allows to write 
\begin{equation}\label{eq-phianalytic}
    \tilde{\phi}_s (x , \tau )=\frac{N_1(x,\tau)}{N_2(x,\tau)}
\end{equation}
 with $N_1,N_2\in \bbC\{u_s(x)\}[\tau]$. For some open domain $D_1\subseteq D$ then \eqref{eq-phianalytic} is a well defined analytic function in $(D_1 \times U) \backslash E\subset \bbC^2$ where $E$ is the analytic set $\{N_2(x,\tau)=0\}$. 
 
 For a fixed $c=(x_0,\tau_0)\in (D_1 \times U) \backslash E$ there exists an open domain $\Omega_c\subset (D_1 \times U) \backslash E$ where the differential equation 
 \begin{equation}
     (\partial- \tilde{\phi}_s (x , \tau ))\Upsilon=0
 \end{equation}
 has an analytic solution $\Upsilon (x,\tau)$ in an open subdomain $\widetilde{\Omega}_c$ of $\Omega_c$ (see \cite{Hi}).
\end{proof}

\begin{rem}\label{rem-sheaf}
Observe that Theorem \ref{thm-analytic} guaranties the existence of a sheaf structure of spectral Picard-Vessiot fields, $\pmb{\Lambda}=\pmb{\Lambda} (\Gamma_s) $, on a tubular neighborhood $W\subset\mathbb{C}^2$ of $\Gamma_s$,  possibly outside of an analytic set ( $E$ in the proof of the previous theorem ). For almost all $c\in W$, each fiber $\pmb{\Lambda}_c$ is isomorphic to the differential field of germs at $c$  of meromorphic functions on the spectral curve $\Gamma_s $ over $ K$ extended with a solution $\Upsilon_c$ of \eqref{eq-BA-analytic}. Moreover, when $W'\subset W$ is quasi-compact, the space of sections $\Gamma (W',\pmb{\Lambda})$ has a field structure and then $\pmb{\Lambda}$ is a torsion free sheaf over $W$. The relation between sheaf $ \pmb{\Lambda}$, the $K$-points of \  $\Gamma_s$  \ ,  and its singular locus over  $\mathbb{C}$, it is an intriguing question that we will study in future works.
\end{rem}

%%%%%%%%%%%%%%%%%%%%%%%%%%%%%%%%%%%%%%%%%%%%%%%%%%%%%%%%%%%%%%%%%%%%%%%%%%%%%%%%%%%%
\section{Concluding remarks}\label{sec-Conclusiones}
%%%%%%%%%%%%%%%%%%%%%%%%%%%%%%%%%%%%%%%%%%%%%%%%%%%%%%%%%%%%%%%%%%%%%%%%%%%%%%%%%%%%%%%%%%%

We have generalized the concept of (classical) Picard-Vessiot extension of $L_s-\lambda_0$ over $K$, with algebraically closed constant field $C$, to define a spectral Picard-Vessiot field for $L_s-\lambda$ over $K(\Gamma_s)$ with constant field $C(\Gamma_s)$, which is not necessarily algebraically closed. These spectral Picard-Vessiot fields admit a specialization process at each $P_0 =( \lambda_0 ,\mu_0 )$ in $\Gamma_s$ that allowed us to give the (classical) Picard-Vessiot extensions $\Sigma_{P_0 } / K$ of $L_s-\lambda_0$ in Section \ref{sec-specialization}. In addition, one could analyze the Differential Galois group ${Gal}(\Sigma_{P_0 } / K)$ with the approaches given in \cite{AMW}, \cite{Brez} and  \cite{UW}.

\para 

Whenever the spectral curve $\Gamma_s$ is hyperelliptic, a solution $\Psi_s$ of problem \eqref{eq-problem}, is expressed in \cite{GH}, Theorem 1.20 in terms of $\Theta$-functions, this is the well known Baker-Akheizer function. Also in the hyperelliptic case, the work of Brezhnev (\cite{Brez} and references there in) combines the expression of $\phi_s$ in terms of $\Theta$-functions, associated to the spectral curve, with the definition of classical Picard-Vessiot extensions. It remains open to combine the new spectral Picard-Vessiot structure we have defined   
with the extensively developed $\Theta$-function approach for the hyperelliptic case. 

\para

In Section \ref{sec-Parametric} we studied the case of rational spectral curves transforming the original spectral problem \eqref{eq-problem} into a spectral problem 
\begin{equation}%\label{eq-problempar}
(-\partial^2+u_s-\chi_1(\tau))\Upsilon=0    
\end{equation}
with a free parameter $\tau$. In this case the spectral Picard-Vessiot field of \eqref{eq-problem} is isomorphic to a transcendental Liouvillian extension $K(\tau)(\Upsilon_s)$.  In this context much more can be said about the hyper\-exponential $\Upsilon_s$. In the coefficient field $K(\tau)=C(\tau)\langle u_s\rangle$, the differential integration theorems and algorithms of M. Bronstein in \cite{Bron}, and other references there in, can now be used to compute $\Upsilon_s$. In addition we showed how, in the case of analytic potentials, the algebraic techniques developed in Section \ref{sec-Picard-Vessiot} (for all spectral curves) combined with a global rational parametrization of the curve, allowed us to solve the spectral problem \eqref{eq-problem} analytically in closed form when the spectral curve is a rational curve and has singular points, see \ref{thm-analytic} and \ref{rem-sheaf}.

\vskip1cm

\noindent {\sf Acknowledgments:} We kindly thank all members of the Integrability Madrid Seminar for many fruitful discussions: J. Capit\'an, R. Hern\'andez Heredero, S. Jim\'enez, A. P\' erez-Raposo, J. Rojo Montijano and R. S\'anchez; the member in Colombia: D. Bl\' azquez-Sanz, and the member in Rep\'ublica Dominicana: P.B. Acosta-Hum\' anez. In particular:  to A. P\' erez-Raposo for carefully proof reading this manuscript. We also thank J. P. Ramis and E. Paul, from University of Toulouse 3 - Paul Sabatier, for stimulating discussion on this kind of problems. {  The authors would like to thank the anonymous referee who helped to improve the final version of this work.}

\para

The first two authors are members of the Research Group ``Modelos ma\-tem\'aticos no li\-neales", UPM and S.L. Rueda has been
partially supported by the ``Ministerio de Econom\'\i a y Competitividad"  under the project MTM2014-54141-P. M.A. Zurro is partially supported by Grupo UCM 910444.

%%%%%%%%%%%%%%%%%%%%
%%%%%%%%%%%%%%%%%%%%%%
\appendix
%%%%%%%%%%%%%%%%%%%%%
%%%%%%%%%%%%%%%%%%%%%%%%

\vskip1cm

\noindent{\bf Appendix}

\para

%%%%%%%%%%%%%%%%%%%%%%%%%%%%%%%%%
\begin{appendix}

\renewcommand{\thesection}{\Alph{section}}%
%%%%%%%%%%%%%%%%%%%%%%%%%%%%%%%%%%%%%%%%%%%%%%%%%%%%%%%%%%%%%%%%%%%
\section{KdV potentials}\label{sec-KdV}
%%%%%%%%%%%%%%%%%%%%%%%%%%%%%%%%%%%%%%%%%%%%%%%%%%%%%%%%%%%%%%%%%%%

The purpose of this appendix is to define KdV potentials and prove that $-\partial^2+u_s$ is algebro-geometric only for a KdV potential $u_s$.
We present the KdV hierarchy and the family of diffe\-rential operators of its Lax representation with the language of differential algebra \cite{Ritt}. The KdV-hierarchy was studied for the first time in the paper \cite{GD}. We follow the normalization in \cite{GH}, see also \cite{MRZ}, Section 3.

\para

Let us consider a differential indeterminate $u$ over $C$. We will call {\it formal Schr\" odinger ope\-rator} to the operator $L(u)=-\partial^2+u$ with coefficients in the ring of differential polynomials $C\{u\}=C[u,u',u'',\ldots]$, where $u'$ stands for $\partial(u)$ and $u^{(n)}=\partial^n(u)$, $n\in\bbN$, where $\bbN$ is the set of positive integers including $0$.

\para

Let us consider the pseudo differential operator
\begin{equation}\label{eq-recursion}
\cR=-\frac{1}{4}\partial^2+u+\frac{1}{2}u'\partial^{-1}\mbox{ and its {(formal)} adjoint }
\cR^*=-\frac{1}{4}\partial^2+u-\frac{1}{2}\partial^{-1}u',
\end{equation}
in the ring of pseudo-differential operators in $\partial$ with coefficients in $C\{u\}$  (see \cite{Good})
where $\partial^{-1}$ is the inverse of $\partial$, $\partial^{-1}\partial=\partial\partial^{-1}=1$. {Recall that the (formal) adjoint operator of $T= \sum_{k=-d}^{m} a_k \partial^k$, $a_k\in C\{u\}$ is defined by $T^*=  \sum_{k=-d}^{m} (-1)^{k} \partial^k a_k$, see \cite{Olver}, Theorem 5.31.}

Observe that $\cR^*=\partial^{-1}\cR\partial$.
The operator $\cR^*$ is a recursion operator of the KdV equation (see \cite{Olver}, p. 319).
Applying the recursion operator $\cR$, we define:
\begin{equation}\label{eq-kdv}
\kdv_0:=u',\,\,\, \kdv_n:=\cR(\kdv_{n-1}),\mbox{ for }n\geq 1.
\end{equation}
Applying $\cR^*$ we define:
\begin{equation}\label{eq-fn}
v_0:=1,\,\,\, v_n:=\cR^*(v_{n-1}),\mbox{ for }n\geq 1.
\end{equation}
Hence for $n\in\bbN$ it holds
$2\partial(v_{n+1})=\kdv_n$.
We will call the differential polynomials $\kdv_n$ the {\it KdV differential polynomials}. By \cite{MRZ}, Lemma 3.1, the formulas for  $\kdv_n$ and $v_n$ give differential polynomials in $C\{u\}$.

\para

As in \cite{GH}, we define a family of differential operators in $C\{u\}[\partial]$ of odd order (see also \cite{Dikii}, \cite{Novikov})
\begin{equation}\label{eq-A2s+1}
P_1(u):=\partial,\,\,\,P_{2n+1}(u):=v_{n}\partial-\frac{1}{2}\partial(v_{n})+P_{2n-1}(u)L(u),\mbox{ for }n\geq 1.
\end{equation}
The operators $P_{2n+1}(u)$ have the important property (see for instance \cite{MRZ}, Lemma 3.2)
\begin{equation}\label{eq-kdvcomm}
    [P_{2n+1}(u),L(u)]=\kdv_n(u)
\end{equation}
it is the multiplication operator by the $\kdv_n$ differential polynomial. This is the famous Lax representation of $\kdv_n$, see \cite{GH}, \cite{Novikov}.
We will call the differential operators $P_{2n+1}(u)$ the {\it KdV differential operators}.

\para

Note that after replacing the differential variable $u$ by a potential $u_s\in\Sigma$ we obtain a Schr\" o\-dinger operator $L_s=L(u_s)$ and differential operators  $\{P_{2n+1}^s=P_{2n+1}(u_s)\}_{n\geq 0}$ whose coefficients belong to the differential field
$K=C\langle u_s\rangle$ with derivation $\partial$ and field of constants $C$. The next theorem  tells us how to construct the partner $A_{2s+1}$ of an algebro-geometric $L_s$.

\begin{thm}\label{thm-partner}
Let $L_s=-\partial^2+u_s$ be an algebro-geometric Schr\" odinger operator of level $s$ with partner $A_{2s+1}$.
Then there exists a vector of constants ${\bf c}^s=(c_1^s,\ldots ,c_s^s)\in C^s$ such that
\[A_{2s+1}=P_{2s+1}(u_s)+c_1^s P_{2s-1}(u_s)+\cdots +c_{s}^s P_1(u_s).\]
Furthermore $u_s$ verifies the equation $\KdV_s(u,{\bf c}^s)=0$ of the KdV-hierarchy defined by the diffe\-rential polynomial in $C\{u\}$
\begin{equation}\label{eq-KdVs}
    \KdV_s(u,c^s)=\kdv_s(u)+c_1^s\kdv_{s-1}(u)+\cdots +c_s^s \kdv_0(u).
\end{equation}
\end{thm}
\begin{proof}
Let us assume that $A_{2s+1}$ is monic. Since $\{P_{2i+1}^s\}_{i\leq s}$ and $\{L_s^i\}_{i\leq s}$ are families of operators in $K [\partial]$ of odd and even orders less than $2s+1$ respectively, we divide $A_{2s+1}$ by those families and write
\begin{equation}\label{eq-decompA}
    A_{2s+1}=\sum_{i=0}^s q_{2i+1} P_{2i+1}^s+\sum_{i=0}^s q_{2i} L_s^i
\end{equation}
with $q_{2s+1}=1$ and $q_{2i+1},q_{2i}\in K$.
Let us compute next the commutator  of the right hand side of \eqref{eq-decompA} with $L_s$. Observe that $[a,L_s]=\partial^2 (a)+2\partial (a)\partial$, for $a\in K$ and
\[[q_{2i+1} P_{2i+1}^s,L_s]=-\partial^2(q_{2i+1})P_{2i+1}^s-2\partial(q_{2i+1})\partial P_{2i+1}^s+q_{2i+1}\kdv_i(u_s)\]
and
\[[q_{2i}L_s^i,L_s]=[q_{2i},L_s]L_s^i=(\partial^2(q_{2i})+2\partial(q_{2i})\partial)L_s^i.\]
 The only term of order $2i+2$ is the leading term of $\partial P_{2i+1}^s$ and the only term of order $2i+1$ is the leading term of $\partial L_s^i$.
Since $[A_{2s+1},L_s]=0$ then $\partial(q_{2i})=0$ and $\partial(q_{2i+1})=0$. Therefore $[q_{2i}L_s^i,L_s]=0$ and $q_{2i+1}=c_i^s\in C$, $i=0,\ldots ,n$ implies that
\[0=\sum_{i=0}^s c_i^s \kdv_i(u_s)\mbox{ and }A_{2s+1}=\sum_{i=0}^s c_i^s P_i^s,\]
which proves the result.
\end{proof}

\para

Let us review the converse of the previous theorem, which is a consequence of some well known result, see \cite{GH}:
For a vector ${\bf c}_n=(c_1,\ldots ,c_n)$ of algebraic indeterminates  it holds by \eqref{eq-kdvcomm} and \eqref{eq-KdVs} that,
\begin{equation}
    \left[
    L(u) \ , \ \sum_{i=0}^n c_i P_i(u)
    \right]
    =\KdV_n(u,{\bf c}_n),\,\,\mbox{ in } C\{u\}[\partial].
\end{equation}
Let as assume we are given $L_s=-\partial^2+u_s$ such that $u_s$ is a solution of an equation of the KdV-hierarchy \eqref{eq-KdVs}, which means the following.
There exists a vector of constant ${\bf c}^n=(c_1^n,\ldots ,c_s^n)\in C^n$ such that
after replacing $u$ by $u_s$ and  ${\bf c}_n$ by ${\bf c}^n$ then
\begin{equation}
    [L_s,\sum_{i=0}^n c_i^n P_i(u_s)]=\KdV_n(u_s,{\bf c}^n)=0 \,\,\mbox{ in } K[\partial].
\end{equation}
If $s$ is minimal with the previous property, then there exists a unique vector of constants ${\bf c}^s$ such that  $\KdV_s(u_,{\bf c}^s)=0$ and
,by \cite{MRZ}, Proposition 4.2,  $\cC(L_s)=C[L_s,A_{2s+1}]$. See for instance \cite{MRZ}, Section 4 for more details on the behaviour of the integration constants of the KdV-hierarchy.

\begin{defi}
We say that $u_s$ in $\Sigma$ is a {\it KdV-potential of KdV level $s$} if it verifies the $\KdV_s$ equation \eqref{eq-KdVs} of the KdV-hierarchy for a vector of constants ${\bf c}^s\in C^s$, that is $\KdV_s(u_s,{\bf c}^s)=0$, and $s$ is minimal with this property.
\end{defi}

%%%%%%%%%%%%%%%%%%%%%%%%%%%%%%%%%%%%%%%%%%%%%%%%%%%%%%%%%%%%%%%%%%%
\section{Differential Subresultant Theorem}\label{subsec-subres}
%%%%%%%%%%%%%%%%%%%%%%%%%%%%%%%%%%%%%%%%%%%%%%%%%%%%%%%%%%%%%%%%%%%

We summarize next the definition and some important properties of differential resultants and subresultants to be used in this article. We use mainly the presentation given in \cite{Ch}, see also  \cite{Li} and the recent report \cite{McW}.

\para

Let us consider differential operators $P$ and $Q$ of orders $n$ and $m$ respectively with coefficients in a differential integral domain $(\bbD,\partial)$, whose quotient field $\bbK$ is equipped with the same derivation $\partial$. The ring $\bbK[\partial]$ is a left Euclidean integral domain and therefore every left ideal is left principal. If $\ord(P)\geq \ord(Q)$ then $P=qQ+r$ with $\ord(r)<\ord(Q)$, $q,r\in \bbK[\partial]$. Let us denote by $\gcd(P,Q)$ the greatest common (left) divisor of $P$ and $Q$.

As in the commutative case of polynomials in one variable, $G=gcd(P,Q)$ can be expressed in a unique way as $G=AP+BQ$, where the orders of $A$ and $B$ satisfy the natural restrictions. The search for $A$ and $B$ is equivalent to the resolution of a linear system defined by a linear map

\begin{align*}
    S_k:&\hspace{1.4cm}\bbK^{n+m-2k}\hspace{1.4cm}
    &\longrightarrow \hspace{2.5cm} \bbK^{n+m-k}\\
    &\footnotesize{(a_{n-k-1},\ldots ,a_0 ,b_{m-k-1},\ldots ,b_0 )} &\mapsto \mbox{coefficients of } A P+B Q \ ,
\end{align*}

\noindent when searching for a $\gcd$ $G_k$ of order $k$, $k=0,1,\ldots ,N:=\min\{n,m\}-1$. The matrix $S_k(P,Q)$ of this linear map is the
coefficient matrix of the extended system of differential operator
\[\Xi_k=\{\partial^{m-1-k} P,\ldots \partial P, P, \partial^{n-1-k}Q,\ldots ,\partial Q, Q\}.\]
Observe that $S_k(P,Q)$ is a matrix with $n+m-2k$ rows and $n+m-k$ columns, with entries in $\bbD$. Let $G_k$ be the {\it determinant polynomial} of $S_k(P,Q)$ in $\bbD[\partial]$ (see \cite{Li}  or the proof of Theorem \ref{thm-sresIdeal} for the construction).

\begin{defi}
The {\it subresultant sequence} of $P$ and $Q$ is the sequence $\{G_k\}_{k=0}^N$ of differential operators in $\bbD[\partial]$. The {\it differential resultant of $P$ and $Q$} is the zero order operator
\begin{equation}
    \dres(P,Q):=G_0=\det(S_0(P,Q)).
\end{equation}
\end{defi}

\begin{thm}\label{thm-sresIdeal} (\cite{McW}, Theorem 3.10)
Let $P,Q\in \bbD[\partial]$. The resultant $\dres(P,Q)$ of $P$ and $Q$ belongs to the ideal $(P,Q)$ generated by $P$ and $Q$ in $\bbD[\partial]$.
\end{thm}

\para

The next thm gives us a method to look for $\gcd(P,Q)$ and is essential for the main results of this paper.

\begin{thm}[\cite{Ch}, Theorem 4. Differential Subresultant Theorem]\label{thm-subres}
Given differential operators $P$ and $Q$ in $\bbD[\partial]$,  $\gcd(P,Q)$ is a differential operator of order $r$ if and only if:
\begin{enumerate}
\item $G_k$ is the zero operator for $k=0, 1, ,\ldots ,r-1$ and,
\item $G_r$ is nonzero.
\end{enumerate}
Then $\gcd(P,Q)=G_r$.
\end{thm}

\begin{rem}\label{rem-gcd}
\begin{enumerate}
\item The $\gcd(P,Q)$ is nontrivial (it is not in $\bbK$) if and only if\break  $G_0=\dres(P,Q)=0$.

\item Given $G_r=\gcd(P,Q)$ then $P=\bar{P} G_r$ and $Q=\bar{Q} G_r$, $\bar{P},\bar{Q}\in \bbK[\partial]$.

\end{enumerate}
\end{rem}

In this paper, we will use the case of  differential operators $P=\partial^2+a_0$ and $Q=\partial^{2s+1}+\cdots +b_1\partial+b_0$, $s\geq 1$ in $K[\partial]$, for a differential field $(K,\partial)$. Observe that $P-\lambda$ and $Q-\mu$ are differential operators with coefficients in the differential domain $\bbD=K[\lambda,\mu]$. The differential resultant
\begin{equation}\label{eq-G0}
  G_0=\dres(P-\lambda,Q-\mu)=-\mu^2-\lambda^{2s+1}+\cdots
\end{equation}
is the determinant of the matrix $S_0(P-\lambda,Q-\mu)$, whose coefficients are in $K[\lambda,\mu]$ and it belongs to the elimination ideal
$$(P-\lambda,Q-\mu)\cap K[\lambda,\mu].$$

The first subresultant $G_1$ is a differential operator of order $1$, the determinant polynomial of the matrix  $S_1(P-\lambda,Q-\mu)$, whose rows are the coefficients of
$$\Xi_{1}=\{\partial^{2s} (P-\lambda), \partial(P-\lambda), P-\lambda, Q-\mu\}$$
and whose columns are indexed by $\partial^{2s+1} ,\cdots ,\partial, 1$. The first subresultant equals
\begin{equation}\label{eq-G1}
    G_1=\varphi_1+\varphi_2\partial,
\end{equation}
with $\varphi_1=\det(S_1^0)$ and $\varphi_2=\det(S_1^1)$,  where $S_1^0$ and  $S_1^1$ are the submatrices of $S_1$ obtained by removing columns indexed by $\partial$ and $1$ respectively. Observe that
\begin{equation}\label{eq-S10S11}
    \det(S_1^0)=-\mu-\alpha(\lambda)\mbox{ and }\det(S_1^1)=\varphi_2(\lambda),
\end{equation}
for polynmials $\alpha,\varphi_2\in K[\lambda]$.

\begin{ex}\label{ex-subresLP3}
Given $P=\partial^2+a_0$ and $Q=\partial^{3}+b_2\partial^2+b_1\partial+b_0$ in $K[\partial]$. The differential resultant $\dres(P-\lambda,Q-\mu)$ is the determinant of the matrix $S_0(P-\lambda,Q-\mu)$,
whose rows are the coefficients of the polynomials in $\Xi_0=\{\partial^2 (P-\lambda),\partial (P-\lambda), P-\lambda, \partial (Q-\mu), Q-\mu\}$ and whose columns are indexed by $\partial^4,\ldots ,\partial,1$.
Take now that coefficient matrix
\[S_1(P-\lambda,Q-\mu)=\left(
\begin{array}{cccc}
1 & 0 & a_0-\lambda & a_0'\\
0 & 1 & 0 & a_0-\lambda\\
1 & b_2 & b_1 & b_0-\mu
\end{array}
\right)
\]
of $\Xi_1=\{\partial P, P-\lambda, Q-\mu\}$. The first subresultant $G_1=\det(S_1^0)+\det(S_1^1)\partial$ where
\[S_1^0=\left(
\begin{array}{ccc}
1 & 0 & a_0'\\
0 & 1 & a_0-\lambda\\
1 & b_2  & b_0-\mu
\end{array}
\right),\,\,\,
S_1^1=\left(
\begin{array}{ccc}
1 & 0 & a_0-\lambda \\
0 & 1 & 0 \\
1 & b_2 & b_1
\end{array}
\right).\]
\end{ex}

\nocite{*}

\end{appendix}

\bibliographystyle{cdraifplain}

%\bibliography{xampl}
%%%%%%%%%%%%%%%%%%%%%%%%%%%%%%%%%%%%%
%%%%%%% REFERENCES
%%%%%%%%%%%%%%%%%%%%%%%%%%%%%%
%\section*{References}

\end{document}